\newtheorem{theorem}{Theorem}
\theoremstyle{plain}
\newtheorem{case}{Case}
\newtheorem{conjecture}{Conjecture}
\newtheorem{corollary}{Corollary}
\newtheorem{lemma}{Lemma}
\newtheorem{remark*}{Remark}
\newtheorem{case2}{Subcase}
\newtheorem{fact*}{Fact}
\DeclareMathOperator{\col}{col}
\DeclareMathOperator{\A}{A}
\numberwithin{equation}{section}
\begin{document}
\title[Graph polynomials and group coloring of graphs]{Graph polynomials and group coloring of graphs}

\author[B.~Bosek]{Bart\l omiej Bosek}
\address{Institute of Theoretical Computer Science, Faculty of Mathematics and Computer Science, Jagiellonian University, Krak{\'o}w, Poland}
\email{bartlomiej.bosek@uj.edu.pl}

\author[J.~Grytczuk]{Jaros\l aw Grytczuk}
\address{Faculty of Mathematics and Information Science, Warsaw University of Technology, Warsaw, Poland}
\email{j.grytczuk@mini.pw.edu.pl}

\author[G.~Gutowski]{Grzegorz Gutowski}
\address{Institute of Theoretical Computer Science, Faculty of Mathematics and Computer Science, Jagiellonian University, Krak{\'o}w, Poland}
\email{grzegorz.gutowski@uj.edu.pl}

\author[O.~Serra]{Oriol Serra}
\address{Department of Mathematics, Universitat Polit\`{e}cnica de Catalunya, Barcelona, Spain}
\email{oriol.serra@upc.edu}

\author[M.~Zaj\k{a}c]{Mariusz Zaj\k{a}c}
\address{Faculty of Mathematics and Information Science, Warsaw University of Technology, Warsaw, Poland}
\email{m.zajac@mini.pw.edu.pl}

\thanks{Supported by the Polish National Science Center, Grant Number: NCN 2019/35/B/ST6/02472. Oriol Serra acknowledges financial support from the Spanish Agencia Estatal de Investigaci\'on under project MTM2017-82166-P}

\begin{abstract}Let $\Gamma$ be an Abelian group and let $G$ be a simple graph. We say that $G$ is \emph{$\Gamma$-colorable} if for some fixed orientation of $G$ and every edge labeling $\ell:E(G)\rightarrow \Gamma$, there exists a vertex coloring $c$ by the elements of $\Gamma$ such that $c(y)-c(x)\neq \ell(e)$, for every edge $e=xy$ (oriented from $x$ to $y$).
    
Langhede and Thomassen proved recently that every planar graph on $n$ vertices has at least $2^{n/9}$ different $\mathbb{Z}_5$-colorings. By using a different approach based on graph polynomials, we extend this result to $K_5$-minor-free graphs in the more general setting of \emph{field coloring}. More specifically, we prove that every such graph on $n$ vertices is $\mathbb{F}$-$5$-choosable, whenever $\mathbb{F}$ is an arbitrary field with at least $5$ elements. Moreover, the number of colorings (for every list assignment) is at least $5^{n/4}$.
\end{abstract}
\maketitle

\section{Introduction}Let $\Gamma$ be an Abelian group and let $G$ be a simple graph. We say that $G$ is \emph{$\Gamma$-colorable} if for some orientation of $G$ and every edge labeling $\ell$ by the elements of $ \Gamma$ there exists a vertex coloring $c$ by the elements of $\Gamma$ such that $c(y)-c(x)\neq \ell(e)$, for every edge $e=xy$ (oriented from $x$ towards $y$). This notion was introduced by Jaeger, Linial, Payane and Tarsi~\cite{JaegerLPT} as a dual concept to \emph{group connectivity}.

 Answering a question posed in~\cite{JaegerLPT}, Lai and Zhang~\cite{LaiZhang1} proved that every planar graph is $\mathbb{Z}_5$-colorable. Recently, Langhede and Thomassen~\cite{LanghedeThomassen2} strengthened this result by proving that the number of $\mathbb{Z}_5$-colorings of every planar graph on $n$ vertices (for any fixed edge labeling) is at least $2^{n/9}$. The proof is elementary but quite involved.

In this paper we further extend these results by using the polynomial method. It is convenient to introduce a slightly more general setting. Let $\mathbb{F}$ be an arbitrary field and let $G$ be a simple graph. Suppose that each edge $e=xy$ of $G$ is assigned a triple $(a_e,b_e,c_e)\in \mathbb{F}^3$, with $a_e,b_e\neq0$. We say that $G$ is \emph{$\mathbb{F}$-colorable} if for every such edge labeling there exists a vertex coloring $f$ by the elements of $\mathbb{F}$ such that $a_ef(x)+b_ef(y)+c_e\neq 0$, for every edge $e=xy$. Clearly, $\mathbb{F}$-colorability of a graph implies its $\Gamma$-colorability, where $\Gamma$ is the additive group of the field $\mathbb{F}$. 

 Define a graph $G$ to be \emph{$\mathbb{F}$-$k$-choosable} if it is $\mathbb{F}$-colorable from arbitrary \emph{lists} of elements of $\mathbb{F}$, each of size $k$, assigned to the vertices.

\begin{theorem}\label{Theorem K_5 Minor Main} Let $G$ be a graph on $n$ vertices without a minor of $K_5$. Let $\mathbb{F}$ be an arbitrary field with at least $5$ elements. Then $G$ is $\mathbb{F}$-$5$-choosable. Moreover, the number of colorings (for any fixed list assignment and any fixed edge labeling) is at least $5^{n/4}$. 
\end{theorem}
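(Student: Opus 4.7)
The natural approach is the polynomial method. Associate to $G$ (with its edge labelling) the graph polynomial
\[
P_G = \prod_{e = uv \in E(G)} (a_e x_u + b_e x_v + c_e) \in \mathbb{F}[x_v : v \in V(G)],
\]
so that a list-colouring $f$ is valid precisely when $P_G(f) \neq 0$. By the Combinatorial Nullstellensatz, it suffices to produce a monomial $\prod_v x_v^{d_v}$ with each $d_v \leq 4$ whose coefficient in $P_G$ is non-zero. Since $a_e, b_e \neq 0$, the top-degree part $\prod_e (a_e x_u + b_e x_v)$ of $P_G$ is a twisted version of the classical Alon--Tarsi polynomial, and its monomial coefficients are (up to a non-zero global scalar) signed counts of orientations of $G$ with out-degree sequence $(d_v)$. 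Hence the main statement reduces to showing $\AT(G) \leq 5$ for every $K_5$-minor-free graph $G$, together with a quantitative refinement for the count of colourings.

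To establish $\AT(G) \leq 5$, I would use Wagner's structure theorem: every edge-maximal $K_5$-minor-free graph is obtained from planar triangulations and the Wagner graph $V_8$ by $3$-sums along cliques of size at most three. I would then extend Zhu's theorem that $\AT(G) \leq 5$ for planar $G$ (which mirrors Thomassen's inductive proof of $5$-choosability) to this clique-sum decomposition. The base cases are planar graphs (directly by Zhu's theorem, transferred to the present polynomial via its top-degree part) and $V_8$ (by a direct computation of its Alon--Tarsi invariant). The inductive step -- gluing $G_1$ and $G_2$ over a shared clique of size at most $3$ and combining their Alon--Tarsi orientations -- is the main new content.

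For the quantitative bound, I would use the fact that when the coefficient of $\prod_v x_v^{d_v}$ in $P_G$ is non-zero with $d_v \leq 4$, the number of valid colourings from any $5$-lists is at least $\prod_v (5 - d_v)$ (a standard strengthening of the Nullstellensatz in the spirit of Alon--F\"uredi). Any orientation witnessing $\AT(G) \leq 5$ has maximum out-degree $4$, and since $\sum_v d_v = |E(G)| \leq 3n - 6$ by the Mader--Wagner edge bound for $K_5$-minor-free graphs, at least $(n + 6)/4$ vertices must be sinks, i.e.\ have $d_v = 0$. Each such sink contributes a factor of $5$ to the product, giving at least $5^{(n+6)/4} \geq 5^{n/4}$ colourings.

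\textbf{Main obstacle.} The delicate step is the clique-sum induction. The Alon--Tarsi invariant is a global signed count of Eulerian subgraphs, and the invariants of $G_1$ and $G_2$ do not combine straightforwardly across a shared triangle: one must choose orientations of $G_1$ and $G_2$ that are \emph{compatible} on the common clique, so that the cross-terms interact favourably and the invariant of the $3$-sum remains non-zero. The case analysis (on the out-degree distribution along the shared clique of size $1$, $2$ or $3$), together with the direct handling of $V_8$, is where the bulk of the work lies; the quantitative $5^{n/4}$ lower bound then falls out essentially for free from the edge bound $|E(G)|\leq 3n-6$.
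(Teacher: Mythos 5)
Your outline correctly identifies the overall strategy -- Wagner's theorem, a clique-sum induction, the Combinatorial Nullstellensatz to reduce to finding a non-vanishing monomial of max degree $\leq 4$, and an Alon--F\"uredi-style count -- which is exactly the paper's route. However, there is a genuine gap at the step you yourself flag as the ``main obstacle'': you never resolve how to combine the two pieces of a clique-sum. Your framing (choose ``compatible'' orientations on the shared clique, control ``cross-terms'') suggests a route that does not actually work cleanly, because after gluing, both pieces contribute degree on the shared vertices and the product of polynomials would have cross-terms making the coefficient calculation intractable.

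The paper's key idea, which your proposal is missing, is a strengthening of Zhu's near-triangulation lemma (Theorem~\ref{Theorem Near-Triangulation}) yielding Theorem~\ref{Theorem Triangulation}: for any plane triangulation $F$ and any triangle $T$ in it, the decorated polynomial $D_{F-E(T)}$ has a non-vanishing monomial $N$ with $\deg_u(N)=0$ for every $u\in V(T)$ and $\deg_w(N)\leqslant 4$ elsewhere. One then writes $D_G = D_H\cdot D_{F-E(T)}$ for a $3$-clique-sum of $H$ and $F$, and because $N$ avoids the shared variables entirely, the monomial $MN$ (with $M$ from $D_H$ by induction) has a \emph{unique} factorization across the product; thus $c_{MN}(D_G)=c_M(D_H)\cdot c_N(D_{F-E(T)})\neq 0$ with no cross-terms at all. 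This degree-zero-on-the-glued-triangle device is what makes the induction go through, and the $V_8$ case is handled similarly by exhibiting an acyclic orientation of $V_8-e$ with in-degree $0$ at both endpoints of $e$. Without this, your induction does not close.

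On the quantitative bound: you invoke a claimed ``standard strengthening'' that the count of colourings is at least $\prod_v(5-d_v)$. This is not an off-the-shelf consequence of the Nullstellensatz and would need a proof or citation. The paper instead applies Alon--F\"uredi directly (or a self-contained weaker version, Theorem~\ref{Theorem Alon-Furedi Weak}): with $t=5$, $S=5n$, and $\deg D_{G,\ell}\leq 3n-6$, one gets $5^{(S-n-\deg)/4}\geq 5^{(n+6)/4}$. Numerically this matches your estimate, but the paper's route is the rigorous one; also note that $\deg D_{G,\ell}$ equals the number of edges, not the sum of the $d_v$ of a particular witnessing monomial, so the degree bound should be argued via $|E(G)|\leq 3n-6$ rather than via sink-counting in a fixed orientation.
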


The proof is based on the method of graph polynomials. We use two tools, the Combinatorial Nullstellensatz of Alon~\cite{AlonCN} and a result of Alon and F\"{u}redi~\cite{AlonFuredi} concerning the number of non-zero values of a polynomial evaluated at all points of a multidimensional grid.

Notice that Theorem~\ref{Theorem K_5 Minor Main} only covers Abelian groups that are additive groups of a field. For a general Abelian group $\Gamma$ of order at least $5$, Chuang, Lai, Omidi, Wang, and Zakeri proved in~\cite{ChuangLOWZ} by elementary methods that every $K_5$-minor free graph is $\Gamma$-5-choosable. Theorem~\ref{Theorem K_5 Minor Main} does not extend this result, but in the overlapping cases guarantees a stronger conclusion, and also easily implies it in the case of arbitrary cyclic groups $\Gamma$ (Theorem~\ref{Theorem Cyclic Groups}).

\section{The results}

\subsection{Graph polynomials}
Let $G$ be a simple graph on the set of vertices $V(G)=\{x_1,x_2,\dots,x_n\}$. Let $P_G$ be the \emph{graph polynomial} of $G$, defined by
\begin{equation}\label{Graph Polynomial}
P_G(x_1,x_2,\dots,x_n)=\prod_{x_ix_j\in E(G), i<j}(x_i-x_j).
\end{equation}
We identify symbols denoting vertices of $G$ with variables of $P_G$. We may consider $P_G$ as a polynomial over an arbitrary field $\Bbb F$. 

In the process of expanding the polynomial $P_G$, one creates monomials by picking one variable from each factor $(x_i-x_j)$. Thus, every monomial corresponds to the unique orientation of $G$ obtained by directing the edge $x_ix_j$ \emph{towards} the picked variable. Thus, the degrees of the variables in the monomial coincide with the in-degrees of the vertices in the corresponding orientation.

Let $\mathcal{M}_G$ denote the multi-set of all monomials arising in this way. So, the cardinality of $\mathcal{M}_G$ is equal to $2^m$, where $m=|E(G)|$, and the multiplicity of each monomial $M$ is equal to the number of orientations of $G$ sharing the same in-degree sequence (corresponding to the degrees of the variables in $M$). The \emph{sign} of a monomial $M\in \mathcal{M}_G$ is the product of signs of all variables picked to form $M$. The \emph{coefficient} of a monomial $M$ in $P_G$, denoted as $c_M(P_G)$, is the sum of signs of all copies of $M$ in $\mathcal{M}_G$. A monomial $M$ is called \emph{non-vanishing} in $P_G$ if $c_M(P_G)\neq 0$.

Suppose now that each edge $e=x_ix_j$ of a graph $G$ (oriented so that $i<j$) is assigned an arbitrary pair $(a_e,b_e)$ of non-zero elements of $\mathbb{F}$. We say that the edges of $G$ are \emph{decorated} with pairs $(a,b)$, and we define the corresponding \emph{decorated} graph polynomial $D_G$ in which every factor $(x_i-x_j)$ corresponding to the edge $e=x_ix_j$ decorated with $(a_e,b_e)$ is substituted with $(a_ex_i+b_ex_j)$:

\begin{equation}\label{Graph Polynomial Decorated}
D_G(x_1,x_2,\dots,x_n)=\prod_{e=x_ix_j\in E(G), i<j}(a_ex_i+b_ex_j).
\end{equation}
 
 Of course, different decorations may give different polynomials, but we denote the whole family of them with the same symbol $D_G$, hoping that this ambiguity will not cause too much confusion. 
 
\subsection{Combinatorial Nullstellensatz}

For a monomial $M$, let $\deg_{x_i}(M)$ denote the degree of the variable $x_i$ in $M$. The \emph{total degree} of the monomial $M$ is the sum $\sum_{i=1}^{n}\deg_{x_i}(M)$. In a graph polynomial each monomial has the same total degree equal to the number of edges of $G$. Recall that the \emph{degree} of a polynomial is the maximum of total degrees of its non-vanishing monomials.

We will use the following famous theorem of Alon~\cite{AlonCN}.

\begin{theorem}[Combinatorial Nullstellensatz,~\cite{AlonCN}] Let $P$ be a polynomial in $\mathbb F[x_1,x_2,\dots,x_n]$, where $\mathbb F$ is an arbitrary field of coefficients. Suppose that there is a non-vanishing monomial $x_1^{k_1} x_2^{k_2}\cdots x_n^{k_n}$ in $P$ whose total degree is equal to the degree of $P$. Then, for arbitrary sets $A_i\subseteq \mathbb F$, with $|A_i|=k_i+1$, there exist elements $a_i\in A_i$ such that $$P(a_1,a_2,\dots,a_n)\neq 0\textrm{.}$$
\end{theorem}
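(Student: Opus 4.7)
The plan is the classical ``reduction-to-the-box'' argument. For each $i$, set
\[
g_i(x_i)=\prod_{a\in A_i}(x_i-a)=x_i^{k_i+1}-h_i(x_i),
\]
with $\deg h_i\le k_i$. The identity $x_i^{k_i+1}=h_i(x_i)+g_i(x_i)$ lets me iteratively replace any occurrence of $x_i^{k_i+1}$ inside $P$ by $h_i(x_i)$, modulo a multiple of $g_i(x_i)$. Applying this rewrite to each variable in turn, I expect to obtain a decomposition
\[
P=\tilde{P}+\sum_{i=1}^{n}q_i\cdot g_i
\]
with $\deg_{x_i}\tilde{P}\le k_i$ for every $i$. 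Since $g_i$ vanishes on $A_i$, the polynomials $P$ and $\tilde{P}$ agree on the whole grid $A_1\times\cdots\times A_n$, so it suffices to exhibit a point of the grid on which $\tilde{P}$ is nonzero.

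The first crucial step is to verify that the reduction preserves the coefficient of $M:=x_1^{k_1}\cdots x_n^{k_n}$. After the rewrite in $x_i$, a new contribution to $M$ could arise only from a monomial of $P$ whose $x_i$-exponent is at least $k_i+1$ and whose $x_j$-exponent equals $k_j$ for the remaining variables; but any such monomial has total degree strictly greater than $k_1+\cdots+k_n=\deg P$, which is impossible by hypothesis. Consequently the coefficient of $M$ in $\tilde{P}$ equals the coefficient of $M$ in $P$, which is nonzero, so $\tilde{P}$ itself is a nonzero polynomial with $\deg_{x_i}\tilde{P}\le k_i$ for every $i$.

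To finish, I would invoke the standard fact that a polynomial $Q\in\mathbb{F}[x_1,\dots,x_n]$ with $\deg_{x_i}Q\le k_i$ which vanishes identically on $A_1\times\cdots\times A_n$ (where $|A_i|=k_i+1$) must be the zero polynomial; this is proved by induction on $n$, since fixing the first $n-1$ coordinates to values in $A_1\times\cdots\times A_{n-1}$ leaves a one-variable polynomial of degree at most $k_n$ with $k_n+1$ distinct roots in $A_n$. Applying the contrapositive to $\tilde{P}$ produces a point $(a_1,\dots,a_n)\in A_1\times\cdots\times A_n$ with $\tilde{P}(a_1,\dots,a_n)\neq 0$, and then $P(a_1,\dots,a_n)\neq 0$ by the grid agreement.

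The only delicate obstacle is the coefficient-tracking in the middle step: one must guarantee that the successive rewrites do not inadvertently produce a fresh copy of $M$ that cancels the original. The top-degree hypothesis on $M$ is precisely what rules this out, since no monomial of $P$ can exceed the total degree $\sum k_i$; every other ingredient is bookkeeping or the standard one-variable root-counting principle.
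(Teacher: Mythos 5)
Your argument is correct. Note, however, that the paper does not prove this theorem at all: it is quoted verbatim from Alon's paper \cite{AlonCN}, so there is no in-paper proof to compare against. What you give is essentially Alon's own classical argument: reduce $P$ modulo the polynomials $g_i(x_i)=\prod_{a\in A_i}(x_i-a)$ to a representative $\tilde P$ with $\deg_{x_i}\tilde P\leqslant k_i$, check that the top-degree hypothesis prevents the rewriting from touching the coefficient of $x_1^{k_1}\cdots x_n^{k_n}$, and conclude with the standard lemma that a nonzero polynomial of degree at most $k_i$ in each $x_i$ cannot vanish on a grid with $|A_i|=k_i+1$. One small point of precision: new contributions to the distinguished monomial must be excluded in the partially reduced polynomial at each stage, not just in $P$ itself; this is harmless because each rewrite only lowers the exponent of the variable being reduced (and leaves the others unchanged), so the total degree never exceeds $\deg P=k_1+\cdots+k_n$ and your degree argument applies at every step. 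With that observation made explicit, the proof is complete; it is also in the same spirit as, though distinct from, the Micha{\l}ek-style induction the paper uses later for its weak Alon--F\"uredi theorem.
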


In view of this theorem it is convenient to denote by $\A_{\mathbb{F}}(P)$ the least integer $k$ such that the polynomial $P$ has a non-vanishing monomial $M$ whose degree is equal to the degree of $P$ and satisfying $\deg_{x_i}(M)\leqslant k$, for each $i=1,2,\dots,n$.

Let $D_1$, $D_2$ be any two orientations of the graph $G$ sharing the same in-degree sequence.
Observe that the set of edges oriented differently in $D_1$ than in $D_2$ is an Eulerian subgraph of both $D_1$ and $D_2$.
Thus, every monomial corresponding to an acyclic orientation of $G$ is of multiplicity exactly $1$ in $\mathcal{M}_G$.
Such monomials are non-vanishing in $P_G$, and $\A_{\mathbb{F}}(P_G)$ is well defined for every graph $G$.

For the decorated graph polynomial $D_G$, which denotes a collection of polynomials, we define $\A_{\mathbb{F}}(D_G)$ as the least number $k$ such that $\A_{\mathbb{F}}(P)\leqslant k$ holds for every $P$ in $D_G$. It is not hard to demonstrate that for every graph $G$ we have $$\A_{\mathbb{F}}(P_G)\leqslant \A_{\mathbb{F}}(D_G)\leqslant \col (G)-1\textrm{,}$$ where $\col (G)$ is the \emph{coloring number} of $G$, defined as the least $k$ such that the vertices of $G$ can be linearly ordered so that each vertex $v$ has at most $k-1$ neighbors that precede $v$ in the ordering. 

\subsection{Field coloring and graph polynomials}
Let $\mathbb{F}$ be any field and let $\ell$ be any edge labeling of a graph $G$ by the elements of $\mathbb{F}$. Let $D_G$ be a decorated graph polynomial with some fixed decoration over $\mathbb{F}$. Consider the polynomial $D_{G,\ell}$ defined as:

\begin{equation}\label{Graph Polynomial Labels}
D_{G,\ell}(x_1,x_2,\dots,x_n)=\prod_{e=x_ix_j\in E(G)}(a_ex_i+b_ex_j+\ell(e)).
\end{equation}

Our basic observation is formulated as follows.

\begin{theorem}\label{Theorem General Field} Let $G$ be any simple graph, with a graph polynomial $P_G$, and let $\mathbb{F}$ be an arbitrary field. Let $\ell$ be any edge labeling of $G$ by the elements of $\mathbb{F}$. Then $\A_{\mathbb{F}}(D_{G,\ell})=\A _{\mathbb{F}}(D_{G})$.
\end{theorem}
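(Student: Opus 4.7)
The plan is to observe that $D_{G,\ell}$ differs from $D_G$ only by the additive constants $\ell(e)$ inside its linear factors, and so the two polynomials share the same top-degree homogeneous component. This will immediately imply that they have the same non-vanishing monomials of total degree $m=|E(G)|$, and since both have degree exactly $m$, the invariant $\A_{\mathbb{F}}$ coincides.

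First, I would decompose $D_{G,\ell}$ by total degree. Each factor $(a_e x_i + b_e x_j + \ell(e))$ splits as its linear part $(a_e x_i + b_e x_j)$ plus the constant $\ell(e)$. When we expand the product over all $m$ edges, any monomial of the maximum possible total degree $m$ must arise from picking the linear part in every factor; picking a constant in at least one factor drops the total degree. Hence the degree-$m$ homogeneous component of $D_{G,\ell}$ is exactly $D_G$, and therefore $c_M(D_{G,\ell}) = c_M(D_G)$ for every monomial $M$ of total degree $m$.

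Next, I would argue that both polynomials have degree exactly $m$. The paper already remarks this for $D_G$: two orientations with the same in-degree sequence differ on an Eulerian subgraph, so each monomial coming from an acyclic orientation has multiplicity $1$ in $\mathcal{M}_G$, and its coefficient in $D_G$ is a signed product of the non-zero decorations $a_e, b_e$, hence non-zero. Since the degree-$m$ parts of $D_G$ and $D_{G,\ell}$ agree, the same monomials are non-vanishing in $D_{G,\ell}$, so $\deg(D_{G,\ell}) = m$ as well.

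Combining the two observations, a monomial of total degree $m$ is non-vanishing in $D_{G,\ell}$ if and only if it is non-vanishing in $D_G$, and both polynomials have degree $m$. By the definition of $\A_{\mathbb{F}}$, the smallest $k$ for which there exists such a non-vanishing monomial with each variable degree at most $k$ is the same in both cases, yielding $\A_{\mathbb{F}}(D_{G,\ell}) = \A_{\mathbb{F}}(D_G)$. I do not expect any serious obstacle: the whole argument reduces to the fact that adding a constant to each linear factor does not change the top-degree homogeneous part of the product, and the rest follows from the definitions.
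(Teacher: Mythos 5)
Your proof is essentially identical to the paper's: both decompose $D_{G,\ell}$ as $D_G$ plus lower-degree terms (your ``degree-$m$ homogeneous component is exactly $D_G$'' is the paper's $D_{G,\ell}=D_G+Q$ with $\deg Q<\deg D_G$) and conclude that the top-degree non-vanishing monomials, hence $\A_{\mathbb{F}}$, coincide. If anything you are slightly more explicit than the paper in verifying that $\deg D_{G,\ell}=m$ and in noting that the identity of top-degree parts gives both inequalities needed for equality of $\A_{\mathbb{F}}$.
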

\begin{proof}First notice that the polynomial $D_{G,\ell}$ can be written as:
\begin{equation}\label{Graph Polynomial Labels Sum}
D_{G,\ell}=D_{G}+Q,
\end{equation}
where $Q$ is a polynomial of degree strictly smaller than the degree of $D_G$. Indeed, we obtain the summand $D_G$ by choosing the whole expression $(a_ex_i+b_ex_j)$ in every factor of $D_{G,\ell}$. In every other case, the formed monomial uses at least one constant $\ell(e)$, hence the total degree of such monomial is strictly smaller than the number of edges of $G$, which is equal to the degree of $D_G$. This means that every non-vanishing monomial of $D_G$ does not vanish in $D_{G,\ell}$. This completes the proof. 
    \end{proof}

\subsection{Field coloring of planar graphs}
In~\cite{ZhuAT} Zhu proved that every planar graph $G$ satisfies $\A_{\mathbb{Q}}(P_G)\leqslant 4$, though the same proof works for an arbitrary field. This constitutes an algebraic analog of the famous result of Thomassen~\cite{Thomassen 5} on $5$-choosability of planar graphs. We will derive below a slightly stronger statement.

\begin{theorem}\label{Theorem Zhu Decorated} Let $G$ be a planar graph and let $D_G$ be its decorated graph polynomial over an arbitrary field $\mathbb{F}$. Then $\A_{\mathbb{F}}(D_G)\leqslant 4$.
\end{theorem}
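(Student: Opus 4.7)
The plan is to refine Zhu's proof~\cite{ZhuAT} of the bound $\A_{\mathbb{F}}(P_G) \leqslant 4$ by extracting from it an explicit acyclic orientation of $G$, and then to observe that any such orientation yields a non-vanishing monomial for \emph{every} decoration, not merely for the undecorated polynomial $P_G$.

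First I would prove by induction on $|V(G)|$ the following Thomassen-style strengthening: if $G$ is a plane near-triangulation whose outer face is bounded by a cycle $C$, and if $xy$ is an edge of $C$, then $G$ admits an acyclic orientation $\sigma$ in which $x$ has in-degree $0$, $y$ has in-degree $1$, every other vertex of $C$ has in-degree at most $2$, and every interior vertex has in-degree at most $4$. The inductive step would identify a reducible configuration on the outer boundary (a chord of $C$ splitting $G$ into two smaller near-triangulations, or an appropriate interior neighbour of $C$ to be deleted), apply the inductive hypothesis to the resulting piece(s), and extend the orientation across the removed portion. This is the orientation-valued translation of Thomassen's argument from~\cite{Thomassen 5}, and I would cite rather than reproduce it.

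Given the claim, I would handle an arbitrary planar graph $G$ by first embedding it into a plane near-triangulation $G^+$ on the same vertex set (adding edges as necessary), applying the claim to $G^+$, and restricting the resulting orientation to $E(G)$. This yields an acyclic orientation $\sigma$ of $G$ with all in-degrees at most $4$. Let $M = \prod_{i=1}^{n} x_i^{d_{\sigma}^{-}(x_i)}$ be the monomial recording the in-degree sequence of $\sigma$. Because $\sigma$ is acyclic, it is the unique orientation of $G$ with this in-degree sequence (the symmetric difference of two orientations sharing an in-degree sequence is Eulerian in both, as already observed earlier in the paper), so for \emph{any} decoration the coefficient of $M$ in $D_G$ reduces to the single product
\[
c_M(D_G) \;=\; \prod_{e \in E(G)} w_e(\sigma), \qquad w_e(\sigma) \in \{a_e,b_e\},
\]
which is non-zero because every $a_e$ and every $b_e$ is non-zero. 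Since $\deg_{x_i}(M) \leqslant 4$ for all $i$ and the total degree of $M$ equals $|E(G)|$, which is the degree of $D_G$, this gives $\A_{\mathbb{F}}(D_G) \leqslant 4$.

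The main obstacle is entirely in the inductive claim above, which inherits the full delicacy of Thomassen's case analysis on reducible configurations; the decorated aspect of the statement, by contrast, is essentially free, because acyclic orientations automatically contribute single-term, and hence non-vanishing, coefficients in every polynomial belonging to the family $D_G$.
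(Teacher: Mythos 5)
Your reduction of the decorated statement to an orientation statement is where the proposal breaks: the inductive claim you want to cite is false. An acyclic orientation in which every in-degree is at most $4$ exists if and only if the graph is $4$-degenerate (order the vertices topologically; the in-degree of a vertex is the number of its neighbours preceding it, and conversely), and planar graphs need not be $4$-degenerate. Concretely, take the icosahedron, a plane triangulation in which every vertex has degree $5$: in any acyclic orientation the sink has all incident edges directed into it, hence in-degree $5$. So no acyclic orientation of this near-triangulation satisfies your in-degree bounds, regardless of how the boundary conditions are arranged, and neither Thomassen's nor Zhu's induction can be ``translated'' into an orientation-valued statement of the kind you describe. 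This is precisely the point of the ``unexpected twist'' in Zhu's argument that the paper alludes to: the non-vanishing monomial of degree at most $4$ is in general \emph{not} the monomial of an acyclic orientation, so its coefficient cannot be certified as a single surviving product.

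The paper's proof (Theorem~\ref{Theorem Near-Triangulation}) therefore works at the level of coefficients rather than orientations. It proves by induction that $D_{G-e}$ has a non-vanishing monomial with $\deg_x=\deg_y=0$, boundary degrees at most $2$ and interior degrees at most $4$, and at each step it certifies non-vanishing not by acyclicity but by a unique-factorization argument: writing $P=P'Q$, the constructed monomial $M$ decomposes in only one way as a product of a monomial of $P'$ and a monomial of $Q$, so $c_M(P)=c_{M'}(P')\cdot c_N(Q)\neq 0$; the delicate subcase is exactly when such uniqueness could fail, and it is handled by first looking for a ``special'' monomial in $P'$. Your closing observation --- that monomials of acyclic orientations are automatically non-vanishing in every decoration --- is correct, but by the degeneracy equivalence it can only yield $\A_{\mathbb{F}}(D_G)\leqslant\col(G)-1\leqslant 5$ for planar $G$, not the claimed bound of $4$. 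To repair the argument you would have to redo the induction for the coefficient statement itself (essentially reproving Theorem~\ref{Theorem Near-Triangulation} for decorated polynomials), not merely cite Thomassen/Zhu for an orientation that does not exist.
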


By Theorems~\ref{Theorem General Field} and~\ref{Theorem Zhu Decorated} we get immediately the following result.

\begin{corollary}\label{Corollary Z_5 Planar}
Every planar graph is $\mathbb{F}$-$5$-choosable, where $\mathbb{F}$ is an arbitrary field with at least $5$ elements.
\end{corollary}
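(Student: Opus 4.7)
My plan is to adapt Zhu's inductive proof of $\A_{\mathbb{F}}(P_G) \leqslant 4$ to the decorated polynomial $D_G$. Zhu follows the Thomassen template for planar $5$-list-coloring: by induction on $|V(G)|$ one proves a strengthened statement about planar near-triangulations with a designated outer face and a short precolored path on its boundary. The polynomial content is to exhibit a non-vanishing monomial $M = \prod_i x_i^{k_i}$ of total degree $|E(G)|$, with $k_i \leqslant 4$ at interior vertices and tighter bounds at the prescribed boundary vertices. I would formulate the analogous strengthened statement for $D_G$, requiring a monomial of the same shape to be non-vanishing for every decoration, while allowing the choice of monomial to depend on the decoration.

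For the base case (a small near-triangulation), the coefficient of $M$ in $D_G$ reduces to an explicit non-zero product of the decoration scalars $a_e, b_e$. For the inductive step, I would trace Zhu's vertex-removal reduction at a distinguished boundary vertex $v$ of degree $d$. Viewing $D_G$ as a polynomial in $x_v$, one factors
\[
D_G = \prod_{e = vu \in E(G)} (a_e x_v + b_e x_u) \cdot D_{G - v}(x_1, \ldots, \widehat{x_v}, \ldots, x_n),
\]
so that the coefficient of $x_v^{k_v}$ is, up to a non-zero product of the scalars $a_e$, an elementary symmetric expression in the scaled variables $(b_e / a_e) x_u$, multiplied by $D_{G - v}$. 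The algebraic identities that drive Zhu's induction rely only on the bilinear factors having \emph{both} coefficients non-zero, which is precisely what a decoration guarantees, so the reduction to a smaller decorated polynomial $D_{G - v}$ goes through verbatim.

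The main obstacle will be the inductive steps where Zhu combines contributions from two or more orientations sharing an in-degree sequence and uses a non-cancellation argument in $\mathbb{Z}$. For $D_G$, these orientations contribute distinct monomials in the decoration variables, so a particular choice of $a_e, b_e$ may produce a spurious $\mathbb{F}$-cancellation; recall that insisting the \emph{same} monomial work for every decoration would force one to use acyclic orientations, which is too restrictive since not every planar graph satisfies $\col(G) \leqslant 5$. To handle this, I would exploit the freedom to let $M$ depend on the decoration: whenever a default candidate monomial fails at an offending inductive step, shift variable degrees locally at a few adjacent vertices within Zhu's inductive bounds to produce an alternative candidate. The key observation is that Zhu's reduction naturally supplies several candidate monomials at each step, and the statements ``all candidates vanish simultaneously'' translate into polynomial equations on $(a_e, b_e)$ which, by direct inspection of Zhu's coefficients, are incompatible with the non-vanishing of every $a_e$ and $b_e$. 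A careful case analysis mirroring Zhu's, but tracking the candidate monomials explicitly across the decoration, will close the induction and yield $\A_{\mathbb{F}}(D_G) \leqslant 4$.
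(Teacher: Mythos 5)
Your plan tracks the paper's strategy closely: reprove Zhu's bound $\A_{\mathbb{F}}(P_G)\leqslant 4$ for the decorated polynomial $D_G$ by induction on near-triangulations, then invoke Combinatorial Nullstellensatz. You also correctly identify the central obstacle: over an arbitrary field with arbitrary decorations $(a_e,b_e)$, a non-cancellation step that Zhu could settle over $\mathbb{Z}$ might fail.

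Where the proposal falls short is exactly at that critical step. You write that the failure of all candidate monomials ``translates into polynomial equations on $(a_e,b_e)$ which, by direct inspection of Zhu's coefficients, are incompatible with the non-vanishing of every $a_e$ and $b_e$.'' That is not a proof; it is a conjecture about what the case analysis would reveal, and it is not the mechanism the paper uses. The paper's Theorem~\ref{Theorem Near-Triangulation} strengthens the inductive statement (the nice monomial must have $\deg_x=\deg_y=0$ on the removed boundary edge, $\deg\leqslant 2$ on boundary, $\deg\leqslant 4$ inside) and then, in the vertex-removal step, engineers each target monomial so that it has a \emph{unique} factorization $M=AB$ with $A$ from $D_{G'-e}$ and $B$ from the factor $Q$ contributed by the removed vertex $v$. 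Uniqueness of factorization makes the coefficient a pure product $c_{A}\cdot c_{B}$, so no cancellation over $\mathbb{F}$ can occur at all. The delicate case is Subcases~\ref{Subcase Special Monomial}/\ref{Subcase No Special Monomial}: if the smaller polynomial $D_{G'-e}$ admits a ``special'' monomial $S$ (with $\deg_t(S)\leqslant 1$ and one $\deg_{x_i}(S)\leqslant 3$), multiply it by $N'=vtx_1\cdots x_k$ and uniqueness follows because $v$ in $B$ must come from the first factor of $Q$; if no special monomial exists, multiply the nice monomial $M'$ by $N=v^2x_1\cdots x_k$ and uniqueness follows because the only \emph{other} way to build $M$ would force $A$ to be a special monomial, which is precisely what was excluded. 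This dichotomy is the engine of the whole proof, and it is the piece your outline gestures at (``shift variable degrees locally'') without actually producing.

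One smaller gap: even granting $\A_{\mathbb{F}}(D_G)\leqslant 4$, you still have to handle the constant shifts $c_e$ in the edge labeling to get $\mathbb{F}$-$5$-choosability. That is Theorem~\ref{Theorem General Field}: the degree-$|E|$ part of $D_{G,\ell}$ is exactly $D_G$, so the non-vanishing top monomial persists. It is an easy observation, but your proposal never mentions it, and the Corollary does not follow from the decorated bound alone without it.
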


The original proof from~\cite{ZhuAT} is by induction with the same scenario as in Thomassen's famous proof from~\cite{Thomassen 5}, except for one unexpected twist. We will give below a purely algebraic proof along similar lines of the following more general statement, stressing the fact that it works for decorated polynomials over an arbitrary field (which is crucial for our applications).

\begin{theorem}\label{Theorem Near-Triangulation}
    Let $G$ be a near-triangulation and let $e=xy$ be an arbitrary edge of the boundary cycle of $G$. Then a decorated graph polynomial $D_{G-e}$ over an arbitrary field $\Bbb F$ contains a non-vanishing monomial $M$ satisfying the following conditions:
    \item[(i)] $\deg_x(M)=\deg_y(M)=0$,
    \item[(ii)] $\deg_v(M)\leqslant 2$, for every boundary vertex $v$,
    \item[(iii)] $\deg_u(M)\leqslant 4$, for every interior vertex $u$.
\end{theorem}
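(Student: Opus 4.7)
I plan to prove the theorem by induction on $|V(G)|$, following Thomassen's classical scheme for planar $5$-choosability but phrased algebraically via the decorated graph polynomial. The key reduction is: if $G-xy$ admits an \emph{acyclic} orientation whose in-degree sequence satisfies (i)--(iii), then the corresponding monomial $M=\prod_v x_v^{\mathrm{indeg}(v)}$ is non-vanishing in $D_{G-xy}$. Indeed, as noted earlier in the paper, any two orientations of a graph with the same in-degree sequence differ by reversing an Eulerian subdigraph, which must be empty if one of them is acyclic; thus the coefficient of $M$ in $D_{G-xy}$ reduces to the single product of non-zero decoration constants corresponding to that orientation, hence is non-zero. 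Moreover $M$ has total degree $|E(G)|-1 = \deg D_{G-xy}$, so it is exactly the kind of monomial the Combinatorial Nullstellensatz requires. The task therefore reduces to exhibiting an acyclic orientation of $G-xy$ with in-degree $0$ at $x, y$, at most $2$ at every other boundary vertex, and at most $4$ at every interior vertex.

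The base case is the triangle $xyz$, where orienting both edges of $G-xy$ toward $z$ does the job. For the inductive step I distinguish two cases. If the boundary cycle of $G$ contains a chord $uw$ distinct from $xy$, then $G$ decomposes into two smaller near-triangulations $G_1$ (containing $xy$) and $G_2$ (with $uw$ on its boundary), sharing only the chord. Applying induction to $G_1$ (removed edge $xy$) and to $G_2$ (removed edge $uw$) yields acyclic orientations $\alpha_1, \alpha_2$ with $u, w$ being sources of $\alpha_2$. Since $\alpha_1$ already orients the chord $uw$, the union $\alpha = \alpha_1 \cup \alpha_2$ is a well-defined orientation of $G-xy$; acyclicity follows from the topological order that lists $x, y$ first, then the remaining vertices of $V(G_1)$ in $\alpha_1$'s order, and finally the remaining vertices of $V(G_2) \setminus \{u, w\}$ in $\alpha_2$'s order. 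The in-degree bounds at $u, w$ hold because $\alpha_2$ contributes nothing there; the other bounds are inherited from the appropriate $\alpha_i$.

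In the no-chord case I let $v_k$ be the boundary neighbor of $x = v_1$ other than $y$, with interior neighbors $u_1, \dots, u_t$ in cyclic order around $v_k$, and I apply induction to the smaller near-triangulation $G' = G - v_k$, in which each $u_j$ becomes a boundary vertex with in-degree bound $2$. From the resulting acyclic orientation $\alpha'$ of $G'-xy$, I extend to $\alpha$ on $G-xy$ by orienting the $t+2$ edges at $v_k$: necessarily $v_1 \to v_k$, and the natural completion is $v_{k-1} \to v_k$ together with $v_k \to u_j$ for every $j$, which gives $v_k$ in-degree exactly $2$ and boosts each $u_j$ by $1$ (to at most $3 \leq 4$). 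The main obstacle---this is the ``unexpected twist'' alluded to in the discussion of Zhu's proof---is that this natural extension may create a directed cycle of the form $v_{k-1} \to v_k \to u_j \to \cdots \to v_{k-1}$ through a directed path already present in $\alpha'$ (cycles through $v_1$ being ruled out because $v_1$ is a source). To circumvent this, I will consider the family of alternative extensions where $v_k$'s second in-edge comes from some $u_{j_0}$ instead of $v_{k-1}$, combined with $v_k \to v_{k-1}$ and $v_k \to u_j$ for $j \neq j_0$; a case analysis on the topological order of $\alpha'$ along the fan $v_1, u_1, \dots, u_t, v_{k-1}$---parallel to Thomassen's device of simultaneously removing two forbidden colors from the lists of \emph{all} interior neighbors of $v_k$---shows that at least one choice of $j_0$ (or the default extension) yields an acyclic orientation meeting all the in-degree bounds, completing the induction.
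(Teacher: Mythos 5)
Your reduction from non-vanishing monomials to acyclic orientations is sound (an acyclic orientation is the unique one with its in-degree sequence, so the corresponding coefficient in $D_{G-e}$ is a single non-zero product of decoration constants), and the base case and chord case both go through correctly. The gap is in the no-chord case, and it is exactly where the paper's ``unexpected twist'' lives. When the default extension $v_{k-1}\to v_k$, $v_k\to u_j$ (all $j$) creates a directed cycle, there must be a directed path in $\alpha'$ from some $u_j$ to $v_{k-1}$. Your proposed alternatives orient $u_{j_0}\to v_k$ and are therefore forced to orient $v_k\to v_{k-1}$, which raises the in-degree of $v_{k-1}$ by one. But $v_{k-1}$ is a boundary vertex of both $G'$ and $G$, so the inductive hypothesis only gives $\deg_{\alpha'}(v_{k-1})\leqslant 2$, and after the extension it can become $3$, violating condition (ii). The two failure modes — a $u_j\leadsto v_{k-1}$ path killing the default, and $\deg_{\alpha'}(v_{k-1})=2$ killing every alternative — can occur simultaneously, and nothing in your case analysis rules this out. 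Since $v_k$ must receive from $v_1=x$, there are only the options you list, so there is no escape within your framework as stated.

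The paper sidesteps this by \emph{not} restricting to acyclic orientations. It works directly with coefficients: the inductive argument produces either a ``nice'' monomial (with $\deg_t\leqslant 2$) or a non-vanishing \emph{special} monomial $S$ (with the weaker bound $\deg_t(S)\leqslant 1$ but the relaxed bound $\deg_{x_i}(S)\leqslant 3$ at one interior-to-be neighbor), and the two cases use different extension monomials $N=v^2x_1\cdots x_k$ and $N'=vtx_1\cdots x_k$; the uniqueness of the splitting in the ``no special monomial'' case is a cancellation argument on coefficients, not a statement about a single orientation. To repair your proof you would need either a strengthened inductive hypothesis carrying a second, ``special'' acyclic orientation with $\deg(v_{k-1})\leqslant 1$, or a proof that some acyclic orientation with the required bounds always exists — neither of which is established by the sketch, and it is not clear the latter is even true, since the paper's special monomial need not correspond to an acyclic orientation.
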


Before we present the proof, let us comment on how Theorem~\ref{Theorem Zhu Decorated} is derived from Theorem~\ref{Theorem Near-Triangulation}.
We can choose any planar drawing of a planar graph $G$, and any edge $e=xy$ of the boundary cycle of the drawing.
Let $M$ be the non-vanishing monomial in $D_{G-e}$ given by Theorem~\ref{Theorem Near-Triangulation}, and observe that the monomial $Mx$ does not vanish in $D_G$ and certifies that $\A_{\mathbb{F}}(D_G) \leq 4$.

\begin{proof}[Proof of Theorem~\ref{Theorem Near-Triangulation}]
    Let us denote $P=D_{G-e}$, and call a non-vanishing monomial $M$ in $P$ satisfying conditions (i)-(iii), a \emph{nice} monomial for $(G,e)$. We use induction on the number of vertices of $G$. It is easy to check that $G=K_3$, a complete graph on vertex set $\{x,y,v\}$, satisfies the assertion. In this case we have $D_{G-e}=(a_1x+b_1v)(a_2y+b_2v)$, and the only nice monomial is $M=v^2$, whose coefficient is $b_1b_2\neq 0$. Hence, $M$ is non-vanishing.
    
    We distinguish two cases.
    \begin{case}\label{Case Chord}
        The boundary cycle of $G$ has a chord.
    \end{case}
    Suppose first that $G$ has a chord $f=wz$. This chord splits $G$ into two subgraphs $G_1$ and $G_2$. We assume that $e$ is in $G_1$, while $f$ belongs to both subgraphs. By induction we assume that both graphs contain nice non-vanishing monomials $M_1$ and $M_2$ for $(G_1,e)$ and $(G_2,f)$, respectively.
    
    Let us denote $P_1=D_{G_1-e}$, and $P_2=D_{G_2-f}$. Then we have $P=P_1P_2$, and we see that the monomial $M=M_1M_2$ appears in the expansion of $P$. We claim that $M$ is a nice monomial for $(G,e)$. It is easy to see that $M$ satisfies conditions (i)-(iii). To see that it is non-vanishing, notice that $M=M_1M_2$ is the only way of expressing the monomial $M$ as a product of two monomials from $P_1$ and $P_2$, respectively. This is because the only common variables of $P_1$ and $P_2$ are $w$ and $z$, and they do not occur in $M_2$. This shows that $$c_M(P)=c_{M_1}(P_1)\cdot c_{M_2}(P_2)\neq 0\textrm{,}$$ confirming that $M$ is non-vanishing in the polynomial $P$.
    
    \begin{case}\label{Case No Chord}
        The boundary cycle of $G$ has no chord.
    \end{case}
    
    Suppose that there is no chord in $G$. Let $v\neq x$ be the neighbor of $y$ on the boundary of $G$. Let $t$ be the other neighbor of $v$ on the outer face, and let $x_1,x_2,\dots, x_k$ be the neighbors of $v$ lying in the interior of $G$. Let $G'=G-v$.
    
    By the inductive assumption, there is a nice monomial $M'$ for $(G',e)$ in the graph polynomial $P'=D_{G'-e}$. So, we have $c_{M'}(P')\neq 0$.
    
    \begin{case2}\label{Subcase Boundary Triangle}
        The boundary face is a triangle.
    \end{case2}
    
    Suppose first that $t=x$, which means that the boundary face is a triangle. In this case the nice monomial $M'$ has the form $$M'=Yx_1^{r_1}x_2^{r_2}\dots x_k^{r_k}\textrm{,}$$ where $r_i\leqslant 2$ for each $i=1,2,\dots,k$, and $Y$ is a monomial consisting of the rest of the variables. Since $M'$ is nice for $(G',e)$, the monomial $Y$ does not contain variables $x$ and $y$, and each other variable $z$ in $Y$ satisfies $\deg_z(Y)\leqslant 4$.

    First notice that $P=P'Q$, where $$Q=(av+bx)(cv+dy)(a_1v+b_1x_1)(a_2v+b_2x_2)\dots(a_kv+b_kx_k)\textrm{.}$$ In the expansion of $Q$ we get the monomial $$N=v^2x_1\dots x_k\textrm{,}$$ whose coefficient is $c_N(Q)=acb_1\cdots b_k\neq0$. Hence, in the expansion of the product $P'Q$ we get the monomial $M=M'N$, which can be written as $$M=Yx_1^{r_1+1}x_2^{r_2+1}\dots x_k^{r_k+1}v^2\textrm{.}$$ Clearly $M$ satisfies conditions (i)-(iii). We claim that it is also non-vanishing in $P$. More specifically, we claim that there is only one way of expressing $M$ as a product $M=AB$ of two monomials, with $A$ from $P'$ and $B$ from $Q$. Indeed, to get $v^2$ in $B$ we have to use the first two factors of $Q$, since otherwise we have $x$ or $y$ in $M$. This implies that $B=N$ and $A=M'$. Thus $$c_M(P)=c_{M'}(P')\cdot c_N(Q)\neq 0\textrm{,}$$ which shows that $M$ is non-vanishing in $P$.
   
    For the remaining subcases, we assume that $t\neq x$.

    \begin{case2}\label{Subcase Special Monomial}
        There is a special monomial.
    \end{case2}
    
    Suppose first that there exists a non-vanishing \emph{special} monomial $S$ in $P'$ which satisfies all conditions (i)-(iii), except that $\deg_t(S)\leqslant 1$ and $\deg_{x_i}(S)\leqslant 3$ for at most one $i$. We may assume without loss of generality that $i=1$. So, we assume that $c_{S}(P')\neq 0$.
    
    This special monomial $S$ can be written in the form $$S=Zx_1^{s_1}x_2^{s_2}\dots x_k^{s_k}t^s\textrm{,}$$ where $s\leqslant 1$, $s_1\leqslant 3$, $s_i\leqslant 2$ for $i=2,3,\dots k$, and $Z$ is some monomial consisting of the rest of the variables. As before we may write the polynomial $P=D_{G-e}$ as the product $P=P'Q$ with $$Q=(av+by)(cv+dt)(a_1v+b_1x_1)\dots(a_kv+b_kx_k)\textrm{.}$$ In the expansion of $Q$ we get the monomial $$N'=vtx_1\dots x_k\textrm{,}$$ with coefficient $c_{N'}(Q)=adb_1\cdots b_k\neq0$. Hence, in the expansion of the product $P'Q$ we get the monomial $M=SN'$, which can be written as $$M=Zx_1^{s_1+1}x_2^{s_2+1}\dots x_k^{s_k+1}t^{s+1}v\textrm{.}$$
    
    Clearly, $M$ satisfies conditions (i)-(iii). Also, as in the previous case, the splitting $M=SN'$ is unique. Indeed, assume that $M=AB$ is any decomposition of $M$ into the product of monomials from $P'$ and $Q$, respectively. To get $v$ in $B$ we have to use the first factor of $Q$, since otherwise we have $y$ in $M$. This already implies that $B=N'$ and $A=S$. Hence, $$c_M(P)=c_{S}(P')\cdot c_{N'}(Q)\neq 0\textrm{,}$$so, $M$ is non-vanishing in $P$.

    \begin{case2}\label{Subcase No Special Monomial}
        There is no special monomial.
    \end{case2}
    Finally, assume that there is no special monomial in $P'$. However, by inductive assumption, there is still a nice non-vanishing monomial $M'$ in the graph polynomial $P'=D_{G'-e}$. This monomial can be written now as $$M'=Yx_1^{r_1}x_2^{r_2}\dots x_k^{r_k}t^r\textrm{,}$$ where $r\leqslant 2$, $r_i\leqslant 2$, for all $i=1,2,\dots,k$, and $Y$ is a monomial consisting of the rest of the variables.
    
    As in Subcase~\ref{Subcase Boundary Triangle}, we have $P=P'Q$, where $$Q=(av+by)(cv+dt)(a_1v+b_1x_1)(a_2v+b_2x_2)\dots(a_kv+b_kx_k)\textrm{.}$$ In the expansion of $Q$ we get the monomial $$N=v^2x_1\dots x_k\textrm{,}$$ with $c_N(Q)=acb_1\cdots b_k\neq0$. Hence, in the expansion of the product $P'Q$ we get the monomial $M=M'N$, which can be written as $$M=Yx_1^{r_1+1}x_2^{r_2+1}\dots x_k^{r_k+1}t^rv^2\textrm{.}$$ Clearly $M$ satisfies conditions (i)-(iii). We claim that there is only one way of expressing $M$ as a product of two monomials, $M=AB$, with $A$ from $P'$ and $B$ from $Q$. Indeed, to get $v^2$ in $B$ we have to choose variable $v$ exactly twice from the factors of $Q$. The first choice must be from the first factor, otherwise $y$ appears in $M$. The second choice must be from the second factor, since otherwise the variable $t$ appears in $B$, while some $x_i$ is missing. Then, in order to get $M=AB$, we would have to have $t^{r-1}$ and $x_i^{r_i+1}$ in the monomial $A$. But then $A$ is a special monomial in $P'$, contrary to our assumption. Hence, we must have $B=N$ and $A=M'$. Thus $$c_M(P)=c_{M'}(P')\cdot c_N(Q)\neq 0\textrm{,}$$ which demonstrates that $M$ is non-vanishing in $P$.
    
    The proof is complete.
    \end{proof}

In~\cite{GrytczukZhu} Grytczuk and Zhu proved that every planar graph $G$ contains a matching $S$ such that $\A_{\mathbb{F}}(P_{G-S})\leqslant 3$. The proof is similar to the above and can be easily modified to give the following result.

\begin{theorem}\label{Theorem Planar Matching} Every planar graph $G$ contains a matching $S$ such that $\A_{\mathbb{F}}(D_{G-S})\leqslant 3$, for an arbitrary field $\mathbb{F}$. Thus, $G-S$ is $\mathbb{F}$-$4$-choosable, and in particular, $\mathbb{Z}_2\times \mathbb{Z}_2$-colorable.
    
\end{theorem}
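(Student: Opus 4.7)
The plan is to establish a near-triangulation strengthening in the spirit of Theorem~\ref{Theorem Near-Triangulation}: for every near-triangulation $G$ with a specified boundary edge $e = xy$, there exist a matching $S \subseteq E(G-e)$ and a non-vanishing monomial $M$ in $D_{(G-e)-S}$ with $\deg_x(M) = \deg_y(M) = 0$, $\deg_v(M) \leqslant 2$ for every boundary vertex $v$, and $\deg_u(M) \leqslant 3$ for every interior vertex $u$. Once this lemma is in hand, Theorem~\ref{Theorem Planar Matching} follows exactly as Theorem~\ref{Theorem Zhu Decorated} is derived from Theorem~\ref{Theorem Near-Triangulation}: choose a planar drawing of $G$, pick a boundary edge $e$, and observe that $Mx$ is a non-vanishing monomial in $D_{G-S}$ with all variable degrees at most $3$, so $\A_{\mathbb{F}}(D_{G-S}) \leqslant 3$. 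The $\mathbb{F}$-$4$-choosability of $G-S$ (and its $\mathbb{Z}_2 \times \mathbb{Z}_2$-colorability on specializing to $\mathbb{F} = \mathbb{F}_4$) then follows from Theorem~\ref{Theorem General Field} and the Combinatorial Nullstellensatz.

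I would prove the lemma by induction on $|V(G)|$, following the three subcases of the proof of Theorem~\ref{Theorem Near-Triangulation}. The base case $G = K_3$ uses $S = \emptyset$ and $M = v^2$ as before. In the chord case, with chord $f$ splitting $G$ into $G_1$ (containing $e$) and $G_2$, the inductive matchings $S_1$ in $G_1-e$ and $S_2$ in $G_2-f$ combine to $S = S_1 \cup S_2$ in $G-e$, and $M = M_1 M_2$ is non-vanishing in $D_{(G-e)-S}$ by the same unique-factorization argument. In the no-chord case we peel off a boundary vertex $v$, apply induction to $G' = G - v$, and assemble $M$ from $M'$ and a monomial extracted from the product $Q$ of the factors associated to the edges at $v$ in $G-e$. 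The key observation is that in the boundary-triangle and no-special-monomial subcases the interior neighbors $x_i$ of $v$ are boundary vertices of $G'$, so $\deg_{x_i}(M) \leqslant r_i + 1 \leqslant 3$ automatically, and the matching can be inherited as $S = S'$. In the special-monomial subcase the na\"ive construction yields $\deg_{x_1}(M) \leqslant s_1 + 1 \leqslant 4$; I would repair this by dropping the factor $(a_1 v + b_1 x_1)$ from $Q$, using $N' = v t x_2 \cdots x_k$, and adding the edge $vx_1$ to the matching, which forces $\deg_{x_1}(M) = s_1 \leqslant 3$.

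The main obstacle is preserving the matching property across the induction. In the special-monomial subcase the vertex $x_1$ must not already be saturated by $S'$, and in the chord case $S_1 \cup S_2$ must remain a matching, which constrains the saturation status of the chord endpoints $w$ and $z$ in each part. The standard remedy, as in~\cite{GrytczukZhu}, is to strengthen the inductive statement by permitting a small set of boundary vertices to be designated as forbidden from saturation by $S$; together with the freedom to choose which index $i \in \{1, \ldots, k\}$ plays the special role, this is enough to eliminate both kinds of conflict. The algebraic verification that $c_M(P) \neq 0$ via unique decomposition is identical to the proof of Theorem~\ref{Theorem Near-Triangulation}, since passing from $P_G$ to the decorated $D_G$ only multiplies coefficients by the nonzero product of the decoration constants $a_e, b_e$.
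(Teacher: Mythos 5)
Your overall route is the same as the paper's: the paper proves Theorem~\ref{Theorem Planar Matching} simply by pointing to the induction of Grytczuk and Zhu~\cite{GrytczukZhu} and observing that it transfers to decorated polynomials over an arbitrary field, since every coefficient identity in the style of the proof of Theorem~\ref{Theorem Near-Triangulation} only gets multiplied by the nonzero constants $a_e,b_e$. Your base case, chord case, boundary-triangle and no-special-monomial subcases, and your derivation of the theorem from the near-triangulation lemma (multiply by $x$, then apply Theorem~\ref{Theorem General Field} and the Combinatorial Nullstellensatz, taking $\mathbb{F}=\mathbb{F}_4$ for the $\mathbb{Z}_2\times\mathbb{Z}_2$ statement) all match that plan, and the remark that decoration is harmless is correct.

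The gap is exactly at the one point where this theorem differs from Theorem~\ref{Theorem Near-Triangulation}, namely the matching bookkeeping in the special-monomial subcase, and your proposed remedy does not work as stated. First, you do not have ``the freedom to choose which index $i$ plays the special role'': the special index is dictated by which special monomials survive in $P'=D_{(G'-e)-S'}$. If the only non-vanishing special monomials have their degree-$3$ variable at a vertex $x_j$ that is already saturated by $S'$, you can neither add $vx_j$ to the matching nor fall back on the no-special argument, because the interfering decompositions at the index $j$ are then not covered by the ``no special monomial'' assumption and need not vanish, so the key identity $c_M(P)=c_{M'}(P')\cdot c_N(Q)$ is no longer justified. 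Second, strengthening the induction by a set of boundary vertices forbidden from saturation does not propagate: to protect $x_1,\dots,x_k$ you would have to forbid their saturation when recursing into $G'=G-v$, but one level deeper the peeled boundary vertex of $G'$ may be precisely one of these forbidden vertices, i.e.\ exactly the vertex on which the next special subcase needs to place its matching edge. A workable inductive statement has to couple the matching with the monomial degrees (for instance, imposing a strictly smaller degree bound at $S$-saturated boundary vertices and defining ``special'' relative to the current matching), and this coupling is the actual content of the modification of~\cite{GrytczukZhu} that the paper alludes to; since your write-up defers precisely this point, it does not yet close the proof.
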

\subsection{$K_5$-minor-free graphs}
To extend the above results to graphs without a $K_5$-minor we will use the well-known characterization theorem of Wagner~\cite{Wagner}. A similar approach was taken by Abe, Kim, and Ozeki~\cite{AbeKimOzeki} in an extension of the result of Zhu~\cite{ZhuAT} to graphs with no $K_5$-minor.

Recall that a \emph{$k$-clique-sum} of two graphs is a new graph obtained by gluing the two graphs along a clique of size $k$ in each of them, and possibly deleting some edges of the clique. Recall also that the \emph{Wagner graph} $V_8$ is the graph obtained from the cycle $C_8$ by adding four edges joining antipodal pairs of vertices.

\begin{theorem}[Wagner,~\cite{Wagner}] \label{Theorem Wagner}Every edge-maximal graph without a $K_5$-minor can be built recursively from planar triangulations and the graph $V_8$ by clique-sums with cliques on at most $3$ vertices.
\end{theorem}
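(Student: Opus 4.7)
The plan is to proceed by strong induction on $n=|V(G)|$, where $G$ is an edge-maximal graph without a $K_5$-minor. The base cases $n\leqslant 4$ are immediate, since $K_1,K_2,K_3,K_4$ are all planar triangulations (under the mild convention that small complete graphs count as degenerate triangulations). For the inductive step, I would split the argument according to whether $G$ has a small vertex separator or is $4$-connected.

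First, suppose $G$ has a vertex cut $S$ with $|S|\leqslant 3$, and write $V(G)\setminus S = A\sqcup B$ for the corresponding separation; set $G_1=G[A\cup S]$ and $G_2=G[B\cup S]$. Edge-maximality forces $S$ to span a clique in $G$: if $u,v\in S$ were non-adjacent, then adding $uv$ could not create a $K_5$-minor in $G+uv$, since any new minor using the new edge would need a branch set meeting both $A$ and $B$, contradicting the fact that $S$ separates them. Hence $G$ is a $|S|$-clique-sum of $G_1$ and $G_2$, each of which is $K_5$-minor-free and strictly smaller. Extend each $G_i$ to an edge-maximal $K_5$-minor-free graph $\widetilde{G_i}$ on the same vertex set; the new edges can be chosen so that $G$ is recovered from $\widetilde{G_1}$ and $\widetilde{G_2}$ as a clique-sum after possibly deleting some edges inside $S$. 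Apply induction to $\widetilde{G_1}$ and $\widetilde{G_2}$ and concatenate the resulting decomposition trees along $S$.

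The crux is the case that $G$ is $4$-connected, where I would prove the structural claim that $G$ must be either a planar triangulation or isomorphic to the Wagner graph $V_8$. Assume that $G$ is $4$-connected, $K_5$-minor-free, and non-planar; by Kuratowski's theorem $G$ contains a subdivision of $K_5$ or $K_{3,3}$. A $K_5$-subdivision is excluded by the minor hypothesis, so fix a $K_{3,3}$-subdivision $H\subseteq G$ with branch vertices $\{a_1,a_2,a_3\}$ and $\{b_1,b_2,b_3\}$, and analyse the bridges of $H$ in $G$. Using $4$-connectivity, one locates additional internally disjoint paths between branch vertices and, via a careful case analysis of how the non-trivial bridges attach to $H$, shows that any deviation from the symmetric ``antipodal'' attachment pattern allows a path rerouting and edge contraction that produces a $K_5$-minor, contradicting the assumption. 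The remaining attachment pattern is exactly the one that defines $V_8$, and edge-maximality then forces $G\cong V_8$.

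The main obstacle is this $4$-connected case: the detailed bridge/path analysis is precisely the substance of Wagner's original argument and is not easily compressed. The small-cut case is essentially bookkeeping once the edge-maximality trick guaranteeing that cuts span cliques is in place. I would also include a short verification that $V_8$ is itself $K_5$-minor-free (a finite check by direct inspection of the possible branch-set configurations on $8$ vertices), so that the dichotomy in the conclusion is not vacuous.
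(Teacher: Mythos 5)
This statement is Wagner's classical decomposition theorem, which the paper simply cites from~\cite{Wagner} and does not prove; so the only question is whether your sketch would stand on its own, and it does not. The decisive flaw is the claim that ``edge-maximality forces $S$ to span a clique'' for every cut $S$ with $|S|\leqslant 3$. This is false for $|S|=3$: the Wagner graph $V_8$ is itself edge-maximal without a $K_5$-minor, it is triangle-free, and it is $3$-regular (hence $3$- but not $4$-connected), so it has $3$-cuts that are independent sets. Your justification is also wrong in mechanism: a $K_5$-minor in $G+uv$ with $u,v\in S$ need not have a branch set meeting both $A$ and $B$ --- it can live entirely on one side of the separation --- and the genuine argument must instead reroute the new edge $uv$ through a $u$--$v$ path on the other side to exhibit a $K_5$-minor in $G$ itself. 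That rerouting works for cuts of size at most $2$ but can fail for $3$-cuts, and this failure is precisely where $V_8$ enters the theorem. Consequently your case split is structurally misplaced: since $V_8$ is not $4$-connected, in your scheme it falls into the ``small separator'' case, where your clique claim would wrongly force triangles into it; the correct statement for the $4$-connected case is simply that a $4$-connected graph with no $K_5$-minor is planar (and an edge-maximal such graph is a triangulation), while the exceptional outcome $V_8$ must be extracted from the analysis of $3$-cuts that do not span triangles.

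Two further gaps: first, the $4$-connected case --- which you yourself identify as the substance of the theorem --- is only described (``a careful case analysis of how the non-trivial bridges attach'') rather than carried out, so the core of the proof is missing. Second, in the separator case, extending each piece $G_i$ to an edge-maximal graph $\widetilde{G_i}$ may add edges with at most one end in $S$, and a clique-sum only permits deleting edges inside the shared clique; hence the clique-sum of $\widetilde{G_1}$ and $\widetilde{G_2}$ need not recover $G$. The standard route is to prove that the pieces $G_1,G_2$ (with $S$ complete) are themselves already edge-maximal without a $K_5$-minor, which again requires a rerouting argument through the clique $S$ rather than an appeal to arbitrary extensions.
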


We need the following result.

\begin{theorem}\label{Theorem Triangulation}Let $G$ be a plane triangulation and let $T$ be any triangle in $G$. Then the decorated graph polynomial $D_{G-E(T)}$ over an arbitrary field $\mathbb{F}$ contains a non-vanishing monomial $N$ such that $\deg_u(N)=0$, for every vertex $u\in V(T)$, and $\deg_w(N)\leqslant 4$, for all other vertices.
\end{theorem}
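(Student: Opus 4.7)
My plan is to induct on $|V(G)|$, distinguishing whether the triangle $T = xyz$ is a face of $G$ in some planar embedding, or is a separating triangle. The base case $G = T = K_3$ is immediate, since $D_{G-E(T)}$ is the empty product $1$, and the empty monomial has all degrees $0$ and non-zero coefficient.

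\textbf{Case 1 ($T$ is a face).} I would re-embed so that $T$ bounds the outer face, making $G$ a near-triangulation with boundary cycle $T$. Applying Theorem~\ref{Theorem Near-Triangulation} with the boundary edge $e = xy$ yields a non-vanishing monomial $M$ in $D_{G-xy}$ satisfying $\deg_x(M) = \deg_y(M) = 0$, $\deg_z(M) \leqslant 2$ (since $z$ is a boundary vertex), and $\deg_u(M) \leqslant 4$ for interior $u$. The key step is the factorization
\[
D_{G-xy} = D_{G-E(T)} \cdot (a_{xz}x + b_{xz}z)(a_{yz}y + b_{yz}z).
\]
In any decomposition $M = AB$ with $A$ a monomial of $D_{G-E(T)}$ and $B$ a monomial of the two extra factors, the vanishing of $x$ in $M$ forces picking $z$ from the $xz$-factor, and likewise the vanishing of $y$ forces picking $z$ from the $yz$-factor. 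Hence $B = b_{xz}b_{yz}z^2$ is uniquely determined (so in particular $\deg_z(M) = 2$ is forced), and $N := M/z^2$ is a legitimate monomial of $D_{G-E(T)}$ with $\deg_u(N) = 0$ for $u \in V(T)$ and $\deg_w(N) \leqslant 4$ for all other vertices. Uniqueness of the decomposition gives $c_N(D_{G-E(T)}) = c_M(D_{G-xy})/(b_{xz}b_{yz}) \neq 0$.

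\textbf{Case 2 ($T$ is separating).} Then $T$ cuts $G$ into two plane triangulations $G_1$ and $G_2$, each having $T$ as a face and strictly fewer vertices than $G$. The inductive hypothesis supplies non-vanishing monomials $N_i$ in $D_{G_i - E(T)}$ with degree zero on $V(T)$ and at most $4$ elsewhere. Since the edge sets partition as $E(G - E(T)) = E(G_1 - E(T)) \sqcup E(G_2 - E(T))$, we have $D_{G-E(T)} = D_{G_1-E(T)} \cdot D_{G_2-E(T)}$. Set $N = N_1 N_2$. The only variables common to the two factors lie in $V(T)$, and they carry degree $0$ in both $N_1$ and $N_2$; this forces any decomposition $N = A_1 A_2$ arising in the product to satisfy $A_i = N_i$, and hence $c_N = c_{N_1}\,c_{N_2} \neq 0$, while the required degree bounds are clearly preserved.

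The step I expect to be most delicate is the uniqueness argument in Case 1, and in particular the observation that $\deg_z(M) = 2$ is \emph{automatically forced} by the conditions $\deg_x(M) = \deg_y(M) = 0$ rather than being an extra hypothesis. This combinatorial rigidity of the two deleted factors is exactly what permits the clean multiplicative passage from $D_{G-xy}$ down to $D_{G-E(T)}$; once it is in place, both cases assemble without further complication.
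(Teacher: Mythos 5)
Your proposal is correct and follows essentially the same route as the paper: handle a facial $T$ by applying Theorem~\ref{Theorem Near-Triangulation} to $e=xy$ and observing that $\deg_x(M)=\deg_y(M)=0$ forces the variable $z$ (the paper's $v$) to be taken from both remaining factors, so $N=M/z^2$ is non-vanishing in $D_{G-E(T)}$; handle a separating $T$ by splitting $G$ into the inner and outer sub-triangulations and multiplying the two monomials. The only cosmetic difference is that you phrase the separating case as induction on $|V(G)|$, whereas the paper simply applies the facial-triangle argument to each piece directly.
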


\begin{proof}Let $V(T)=\{x,y,v\}$ and denote $e=xy$. Suppose first that $T$ is a facial triangle. We may assume that $T$ is the outer face of $G$. By Theorem~\ref{Theorem Near-Triangulation} we know that there is a non-vanishing monomial $M$ in $D_{G-e}$ such that $\deg_x(M)=\deg_y(M)=0$, $\deg_v(M)\leqslant 2$, and $\deg_w(M)\leqslant 4$, for all other variables. In order to form this monomial we have to pick $v$ exactly twice; once from each factor, $(ax+bv)$ and $(cy+dv)$, since we can choose neither $x$, nor $y$. Thus, when we delete the corresponding two edges $xv$ and $yv$ from the graph $G-e$, we must have a non-vanishing monomial $N=M/v^2$ in the polynomial $D_{G-E(T)}$.
    
    If $T$ is not a facial triangle, then we may split the triangulation $G$ into two sub-triangulations, $G_1$ and $G_2$, lying inside and outside the triangle $T$, respectively. Then we may apply the same argument to each sub-triangulation separately to get the desired monomials $N_1$ and $N_2$ in polynomials $D_{G_1-E(T)}$ and $D_{G_2-E(T)}$, respectively. Clearly, we have $$D_{G-E(T)}=D_{G_1-E(T)}D_{G_2-E(T)}\textrm{,}$$and it is easy to see that $N=N_1N_2$ is a non-vanishing monomial in $D_{G-E(T)}$ satisfying the assertion of the theorem. 
\end{proof}

Now we may give the proof of the aforementioned extension of Theorem~\ref{Theorem Zhu Decorated}.

\begin{theorem}\label{Theorem K_5 Polynomial} Let $G$ be a graph without a $K_5$-minor and let $D_G$ be its decorated graph polynomial over an arbitrary field $\mathbb{F}$. Then $\A_{\mathbb{F}}(D_G)\leqslant 4$.
\end{theorem}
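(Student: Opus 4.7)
The plan is to combine Wagner's structural theorem with a leaf-peeling induction. First, I would establish that $\A_{\mathbb{F}}(D_G)\le\A_{\mathbb{F}}(D_{G^*})$ whenever $G\subseteq G^*$: for any edge $e\in E(G^*)\setminus E(G)$, the factorization $D_{G^*}=D_{G^*-e}\cdot(a_e x+b_e y)$ shows that a non-vanishing monomial $M$ in $D_{G^*}$ with max variable degree at most $k$ forces one of $M/x$, $M/y$ to be non-vanishing in $D_{G^*-e}$ with the same bound. Extending decorations of $G$ to $G^*$ arbitrarily lets me assume $G$ is edge-maximal $K_5$-minor-free, and by Theorem \ref{Theorem Wagner} such a $G$ is built recursively from planar triangulations and copies of $V_8$ by clique-sums along cliques of size at most $3$.

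I would induct on the number of building blocks. For a single block the result is immediate: Theorem \ref{Theorem Zhu Decorated} handles a planar triangulation, while for $G=V_8$ the $3$-regularity gives $\col(V_8)\le 4$ and hence $\A_{\mathbb{F}}(D_{V_8})\le 3$ by the inequality $\A_{\mathbb{F}}(D_G)\le\col(G)-1$ recalled in the paper. For the inductive step, peel off a leaf $H$ of the decomposition tree glued to the remainder $G'$ along a clique $K$ with $|V(K)|\le 3$ (and $|V(K)|\le 2$ when $H=V_8$, since $V_8$ is triangle-free). Then $E(H)\cap E(G')=E(K)$, so $D_G=D_{H-E(K)}\cdot D_{G'}$. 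By induction there exists a non-vanishing monomial $M'$ in $D_{G'}$ of max degree at most $4$, and the crux is to produce a companion non-vanishing monomial $N$ in $D_{H-E(K)}$ satisfying $\deg_u(N)=0$ for $u\in V(K)$ and $\deg_w(N)\le 4$ otherwise.

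For a planar triangulation $H$, the companion lemma is Theorem \ref{Theorem Triangulation} when $K$ is a triangle. When $K=\{e\}$ is an edge, I would redraw $H$ so that $e$ lies on the outer face (any triangular face of a triangulation may serve as the outer face) and apply Theorem \ref{Theorem Near-Triangulation} directly. When $K=\{v\}$ is a single vertex, I would instead place $v$ on the outer face, choose a boundary edge $e=vw$, apply Theorem \ref{Theorem Near-Triangulation} to obtain a non-vanishing monomial $N'$ in $D_{H-e}$ with $\deg_v(N')=\deg_w(N')=0$, and then multiply by the $w$-term from the factor $(a_e v+b_e w)$ to recover a non-vanishing monomial of $D_H$ with $\deg_v=0$ and $\deg_w\le 3$. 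For $H=V_8$, both cases $|V(K)|\in\{1,2\}$ follow by ordering the vertices of $V_8-E(K)$ with those of $V(K)$ placed first: since $V_8$ is $3$-regular and the endpoints of the removed edge (when $|V(K)|=2$) become non-adjacent in $V_8-E(K)$, the induced acyclic orientation has sources exactly $V(K)$ and all in-degrees at most $3$.

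Setting $M=N\cdot M'$ gives a monomial of $D_G$ of total degree $|E(G)|$ and max variable degree at most $4$ everywhere, since $N$ contributes $0$ on $V(K)$. Non-vanishing follows from the unique-decomposition argument used throughout the paper: in any splitting $M=A\cdot B$ with $A$ from $D_{H-E(K)}$ and $B$ from $D_{G'}$, a total-degree count forces $\deg_u(A)=0$ on $V(K)$, so $A=N$ and $B=M'$, giving $c_M(D_G)=c_N(D_{H-E(K)})\cdot c_{M'}(D_{G'})\neq 0$. I expect the main obstacle to be verifying the companion lemma in the vertex-gluing case for a planar triangulation, where Theorem \ref{Theorem Near-Triangulation} is not directly applicable and one must carefully track the extra degree contribution on the neighbor $w$ introduced by the auxiliary edge removal.
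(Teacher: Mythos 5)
Your proposal is correct and takes essentially the same route as the paper: reduce to edge-maximal $K_5$-minor-free graphs, use Wagner's decomposition, induct on the number of blocks, and multiply an inductive witnessing monomial for the rest of the graph by a companion monomial for the peeled block (Theorem~\ref{Theorem Triangulation} or~\ref{Theorem Near-Triangulation} for a triangulation, an acyclic orientation for $V_8$), finishing with the unique-decomposition coefficient argument. You are in fact slightly more thorough than the paper, which neither spells out the subgraph-monotonicity step justifying the passage to edge-maximal graphs nor treats a triangulation block glued along a single vertex or edge (which can occur next to a triangle-free $V_8$ block); your monotonicity lemma and the two extra companion cases cover exactly these points.
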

\begin{proof} Let $G$ be an edge-maximal $K_5$-minor-free graph. We proceed by induction on the number of terms in a clique-sum giving $G$. So, suppose that $G$ is $k$-clique-sum, $k\leqslant 3$, of two graphs $H$ and $F$, where $H$ is a clique-sum with a smaller number of terms, while $F$ is a plane triangulation or $F=V_8$. Assume by induction that $\A_{\mathbb{F}}(D_{H})\leqslant 4$, and let $M$ be the monomial witnessing this inequality with coefficient $c_M(D_H)\neq 0$.
    
    In the triangulation case, let $\{x,y,z\}$ be the three vertices of the common triangle $T$ in $H$ and $F$. Let $N$ be a monomial in $D_{F-E(T)}$ guaranteed by Theorem~\ref{Theorem Triangulation} with coefficient $c_N(D_{F-E(T)})\neq 0$. We claim that the monomial $MN$ occurs in the polynomial $D_G$ with coefficient $$c_{MN}(D_G)=c_M(D_H)\cdot c_N(D_{F-E(T)})\textrm{.}$$ Indeed, we have an obvious equality $D_G=D_{H}D_{F-E(T)}$ and the only common variables of the two polynomial factors are $x,y,z$, none of which appears in the monomial $N$.
    
    If $F=V_8$, then the reasoning is similar. Notice that $V_8$ is triangle-free, so the clique-sum can be made on one vertex or one edge. Suppose it is the latter situation (the former is even easier). Let $x,y$ be the two common vertices of $H$ and $F$. It is enough to notice that for every edge $e=xy$ of $V_8$ there is an acyclic orientation of $V_8-e$ with in-degrees of both vertices $x$ and $y$ equal to $0$. The monomial $J$ corresponding to this orientation has a non-zero coefficient, the variables $x$ and $y$ do not occur in $J$, while other variables have degrees at most $3$. Thus, as before we have $$c_{MJ}(D_G)=c_M(D_H)\cdot c_J(D_{F-e})\neq 0\textrm{.}$$ This completes the proof.
\end{proof}

By Theorems~\ref{Theorem K_5 Polynomial} and~\ref{Theorem General Field} we get immediately the following results, whose special case extends $\mathbb{Z}_5$-colorability of planar  graphs.
\begin{corollary}\label{Corollary Z_5 K_5 Minor}
    Let $\mathbb{F}$ be an arbitrary field with at least $5$ elements. Then every graph $G$ without a $K_5$-minor is $\mathbb{F}$-$5$-choosable.
\end{corollary}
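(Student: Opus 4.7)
The plan is to combine the three main ingredients already established in the paper: the Combinatorial Nullstellensatz, Theorem~\ref{Theorem General Field}, and Theorem~\ref{Theorem K_5 Polynomial}. I would begin by fixing an arbitrary edge decoration $(a_e,b_e)$ with $a_e,b_e\neq 0$, an arbitrary edge labeling $\ell:E(G)\to\mathbb{F}$, and an arbitrary list assignment with $|L(x_i)|=5$ at each vertex. Unpacking the definition, producing an $\mathbb{F}$-coloring from the lists is exactly the same as producing an assignment $f(x_i)\in L(x_i)$ at which the labeled decorated polynomial $D_{G,\ell}$ in (\ref{Graph Polynomial Labels}) does not vanish, because non-vanishing of each factor $a_ef(x)+b_ef(y)+\ell(e)$ is precisely the coloring condition.

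Next, I would apply Theorem~\ref{Theorem K_5 Polynomial} to obtain $\A_{\mathbb{F}}(D_G)\leqslant 4$. This provides a monomial $M=x_1^{k_1}\cdots x_n^{k_n}$ with $k_i\leqslant 4$ for each $i$ whose total degree equals $\deg D_G=|E(G)|$ and whose coefficient in $D_G$ is non-zero. By Theorem~\ref{Theorem General Field}, the polynomial $D_{G,\ell}$ differs from $D_G$ by a polynomial of strictly smaller total degree, hence $M$ is still a non-vanishing monomial in $D_{G,\ell}$ of degree equal to $\deg D_{G,\ell}$, and $\A_{\mathbb{F}}(D_{G,\ell})=\A_{\mathbb{F}}(D_G)\leqslant 4$.

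Finally, I would feed this into the Combinatorial Nullstellensatz: choose any subsets $A_i\subseteq L(x_i)$ with $|A_i|=k_i+1\leqslant 5$, which exist because $|L(x_i)|=5$ and $\mathbb{F}$ has at least $5$ elements (so the lists really do live inside $\mathbb{F}$). The Nullstellensatz hands us elements $f(x_i)\in A_i\subseteq L(x_i)$ with $D_{G,\ell}(f(x_1),\ldots,f(x_n))\neq 0$, and by the reformulation of the first paragraph this is the desired $\mathbb{F}$-coloring from the lists.

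There is essentially no obstacle here beyond what has already been proved: Theorem~\ref{Theorem K_5 Polynomial} carries all of the combinatorial content (via the Wagner decomposition, plane triangulations, and the $V_8$ case), while Theorem~\ref{Theorem General Field} ensures that edge labels cannot spoil a non-vanishing leading monomial. The only minor point worth stating carefully is that the Combinatorial Nullstellensatz asks for sets of size exactly $k_i+1$, not at least $k_i+1$, which is why one must restrict each $L(x_i)$ to a subset of the right size before invoking it; the hypothesis $|\mathbb{F}|\geqslant 5$ enters only to guarantee that lists of size $5$ inside $\mathbb{F}$ make sense at all.
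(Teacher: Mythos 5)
Your proposal is correct and follows exactly the paper's route: the corollary is derived by combining Theorem~\ref{Theorem K_5 Polynomial} ($\A_{\mathbb{F}}(D_G)\leqslant 4$) with Theorem~\ref{Theorem General Field} (labels do not destroy the top-degree non-vanishing monomial) and then invoking the Combinatorial Nullstellensatz on subsets of the lists of size $k_i+1\leqslant 5$. You have merely spelled out the details that the paper leaves as ``immediate,'' so there is nothing to correct.
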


\subsection{Extending to cyclic group choosability}

The polynomial method is tied to an ambient field. One way of extending the potential results to cyclic groups is to use cyclic subgroups of multiplicative groups of fields with appropriate order and to express the conditions on the coloring using multiplication instead of addition. By this trick we get the following result.

\begin{theorem}\label{Theorem Cyclic Groups}
Let $\Gamma$ be an arbitrary cyclic group of order at least $5$. Then every $K_5$-minor free graph $G$ is $\Gamma$-$5$-choosable.
\end{theorem}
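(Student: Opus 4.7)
The plan is to reduce $\Gamma$-$5$-choosability to $\mathbb{F}$-$5$-choosability of $G$ for a suitably chosen field $\mathbb{F}$, and then invoke Corollary~\ref{Corollary Z_5 K_5 Minor}. Writing $|\Gamma|=n\geq 5$ and identifying $\Gamma$ with $\mathbb{Z}_n$, the crucial point is that although $\Gamma$ need not be the additive group of any field, every finite cyclic group embeds as a \emph{multiplicative} subgroup of some finite field. Concretely, pick any prime power $q=p^k$ with $p\nmid n$ and $k$ equal to the multiplicative order of $p$ modulo $n$, so that $n\mid q-1$. Then $\mathbb{F}_q^*$ is cyclic of order $q-1$ and contains a unique cyclic subgroup $H$ of order $n$; fix a generator $\omega$ of $H$ and the isomorphism $\varphi\colon \Gamma\to H$, $\varphi(i)=\omega^i$. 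Note $q\geq n+1\geq 6$, so in particular $\mathbb{F}_q$ has at least $5$ elements.

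The second step is to translate the $\Gamma$-coloring problem to a decorated field-coloring problem on the same graph with the same orientation. Given any edge labeling $\ell\colon E(G)\to\Gamma$ and any list assignment $L\colon V(G)\to\binom{\Gamma}{5}$, define a decoration by setting $a_e=-\omega^{\ell(e)}$, $b_e=1$, $c_e=0$ for each oriented edge $e=xy$ (so $a_e,b_e\neq 0$ as required), and define a transported list assignment $L'(v)=\varphi(L(v))\subset\mathbb{F}_q^*$, which again has size $5$ since $\varphi$ is injective. By Corollary~\ref{Corollary Z_5 K_5 Minor} applied to $G$ over $\mathbb{F}_q$ with this decoration and these lists, there exists $f\colon V(G)\to\mathbb{F}_q$ with $f(v)\in L'(v)$ and $a_e f(x)+b_e f(y)+c_e\neq 0$ for every edge.

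The third step is to pull $f$ back to a $\Gamma$-coloring. Since $f(v)\in L'(v)\subset H$ for every vertex, the function $c(v)=\varphi^{-1}(f(v))$ takes values in $L(v)\subseteq\Gamma$. The edge condition $f(y)-\omega^{\ell(e)}f(x)\neq 0$ becomes, under $\varphi^{-1}$, exactly $c(y)-c(x)\neq \ell(e)$ in $\Gamma$, which is the required non-coloring condition on $e=xy$. Hence $c$ is the desired $\Gamma$-coloring from the lists $L$, and $G$ is $\Gamma$-$5$-choosable.

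The main obstacle is conceptual rather than technical: one must realize that although the \emph{additive} group of $\Gamma$ has no field structure in general, the constraint $c(y)-c(x)\neq\ell(e)$ can be exponentiated to the \emph{multiplicative} constraint $f(y)\neq\omega^{\ell(e)}f(x)$ inside the cyclic group $H\subset\mathbb{F}_q^*$, which fits the linear template $a_e f(x)+b_e f(y)+c_e\neq 0$ to which Theorem~\ref{Theorem K_5 Polynomial} (via Theorem~\ref{Theorem General Field}) applies. Once this translation is set up, the remaining work is purely bookkeeping: verifying that decorations are nonzero, that list sizes are preserved by $\varphi$, and that $|\mathbb{F}_q|\geq 5$.
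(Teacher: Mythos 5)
Your proposal is correct and is essentially the same as the paper's proof: both embed the cyclic group $\Gamma$ as a multiplicative subgroup of $\mathbb{F}_q^*$ (the paper takes $q=p^{\phi(m)}$ and appeals to Euler's theorem, you take the multiplicative order of $p$ modulo $n$; either guarantees $n\mid q-1$), convert the additive constraint $c(y)-c(x)\neq\ell(e)$ into the multiplicative constraint $f(y)\neq\omega^{\ell(e)}f(x)$, and realize the latter as a decorated-graph-polynomial condition to which Theorem~\ref{Theorem K_5 Polynomial} applies. The only cosmetic difference is that you route through Corollary~\ref{Corollary Z_5 K_5 Minor} with an explicit decoration $(a_e,b_e,c_e)=(-\omega^{\ell(e)},1,0)$, while the paper invokes Theorem~\ref{Theorem K_5 Polynomial} directly on the rational function $f_G$.
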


\begin{proof}
Let $\Gamma$ be a (multiplicative) cyclic group of order $m\geqslant 5$. Let $p$ be a prime number such that $\gcd (m,p)=1$. Consider the field $F=\mathbb{F}_{p^{\phi(m)}}$. Its multiplicative group $F^*$ is the cyclic group of order $p^{\phi(m)}-1$. Since $m$ divides $p^{\phi(m)}-1$ (by Euler's theorem), $\Gamma$ is a subgroup of $F^*$.

Let $\ell$ be a labeling of the edges of a graph $G$ by the elements of $\Gamma$. Denote by $\ell_{ij}=\ell(x_ix_j)$ the label of an edge $x_ix_j$ in $G$. For any fixed orientation $\vec{G}$ of $G$, let $d_i$ be the in-degree of the vertex $x_i$. Consider now the following function
$$
f_G(x_1,x_2,\ldots ,x_n)=\prod_{(x_i,x_j)\in E(\vec{G})} (x_ix_j^{-1}-\ell_{ij})=\frac{1}{\prod_{i=1}^n x_i^{d_i}}\prod_{(x_i,x_j)\in E(\vec{G})}(x_i-\ell_{ij}x_j).
$$
By Theorem~\ref{Theorem K_5 Polynomial}, the polynomial
$$
P_G(x_1,x_2,\ldots ,x_n)=\prod_{(x_i,x_j)\in E(\vec{G})}(x_i-\ell(x_ix_j)x_j)
$$
has a nonvanishing monomial of degree at most four. It follows that, for every collection of sets $A_1,A_2,\ldots ,A_n\subseteq \Gamma \subseteq F^*$, each with cardinality at least five, there is a point $(a_1,a_2,\ldots ,a_n)$ in $A_1\times A_2\times \cdots \times A_n$ where $P_G$ is not vanishing, which implies that $f_G(a_1,a_2,\ldots ,a_n)\neq 0$. It follows that $(a_1,a_2,\ldots ,a_n)$ is a $\Gamma$-coloring of $G$ for the labeling $\ell$.

\end{proof}

\subsection{The number of colorings}
In this section we prove the second part of Theorem~\ref{Theorem K_5 Minor Main}. Our main tool is the following general result of Alon and F\"{u}redi~\cite{AlonFuredi}. 

\begin{theorem}[Alon and F\"{u}redi,~\cite{AlonFuredi}]\label{Theorem Alon-Furedi} Let $\mathbb{F}$ be an arbitrary field, let $A_1,A_2,\dots,A_n$ be any non-empty subsets of $\mathbb{F}$, and let $B=A_1\times A_2\times\cdots \times A_n$. Suppose that $P(x_1,x_2,\dots,x_n)$ is a polynomial over $\mathbb{F}$ that does not vanish on all of $B$. Then, the number of points in $B$ for which $P$ has a non-zero value is at least $\min \prod_{i=1}^{n} q_i$, where the minimum is taken over all integers $q_i$ such that $1\leqslant q_i\leqslant |A_i|$ and $\sum_{i=1}^{n} q_i\geqslant \sum_{i=1}^{n}|A_i|-\deg P$.
\end{theorem}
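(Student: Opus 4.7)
I would prove Alon--F\"uredi by induction on $n$, preceded by a reduction that controls the degree in each variable. For each $i$, set $g_i(x_i)=\prod_{a\in A_i}(x_i-a)$, a monic polynomial of degree $|A_i|$ that vanishes on $A_i$. Dividing $P$ repeatedly by the $g_i$'s produces a polynomial $\widetilde P$ that agrees with $P$ on $B$ (so the count of nonzero values is preserved), satisfies $\deg_{x_i}\widetilde P\leq |A_i|-1$ for every $i$, and, because each substitution $x_i^{|A_i|}\mapsto(\text{strictly lower in }x_i)$ never raises the total degree, still satisfies $\deg\widetilde P\leq\deg P$. This is the only mildly subtle step of the preparation, and it relies on the precise monic form of $g_i$.

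The base case $n=1$ is the usual fact that a nonzero univariate polynomial of degree $\leq d$ has at most $d$ roots, giving at least $\max(1,|A_1|-d)$ nonzero values on $A_1$, which matches the minimum in the statement. For the inductive step, write
\[
\widetilde P=\sum_{j=0}^{e}Q_j(x_1,\dots,x_{n-1})\,x_n^{\,j},\qquad Q_e\not\equiv 0,
\]
so $e\leq|A_n|-1$ and $\deg Q_e\leq d-e$, where $d=\deg P$. A key sub-observation is that $Q_e$ cannot vanish identically on $A'=A_1\times\cdots\times A_{n-1}$: its individual degrees are bounded by $|A_i|-1$, and a standard one-variable-at-a-time argument shows that any polynomial with such bounded individual degrees that vanishes on the whole product grid must be zero. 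Apply the inductive hypothesis to $Q_e$ on $A'$ with degree bound $d-e$, obtaining integers $q_1,\dots,q_{n-1}$ with $1\leq q_i\leq|A_i|$ and $\sum_{i<n}q_i\geq\sum_{i<n}|A_i|-(d-e)$, together with at least $\prod_{i<n}q_i$ points of $A'$ at which $Q_e$ is nonzero. At any such point, $\widetilde P(a_1,\dots,a_{n-1},x_n)$ is a univariate polynomial of degree exactly $e$ in $x_n$, hence has at least $q_n:=|A_n|-e\geq 1$ nonzero values on $A_n$.

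Multiplying these two counts yields at least $\prod_{i=1}^{n}q_i$ nonzero values of $\widetilde P$ on $B$. The tuple $(q_1,\dots,q_n)$ constructed above is feasible in the optimization of the statement, since $1\leq q_i\leq|A_i|$ and $\sum q_i\geq\sum|A_i|-d$; consequently $\prod q_i\geq\min\prod q_i$ over all feasible tuples, completing the induction. The step I expect to be the main obstacle is ruling out that the leading coefficient $Q_e$ vanishes on $A'$, which is precisely where the preliminary reduction modulo the $g_i$'s pays off; the remaining arithmetic of matching the constraint $\sum q_i\geq\sum|A_i|-d$ under the splitting $d=(d-e)+e$ is routine linear bookkeeping.
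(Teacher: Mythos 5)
Your proposal is correct, and it is worth noting that it proves more than the paper itself does for this statement: the paper quotes Theorem~\ref{Theorem Alon-Furedi} from Alon--F\"uredi without proof and instead establishes only the weaker Theorem~\ref{Theorem Alon-Furedi Weak}, which replaces the optimum $\min\prod q_i$ by the explicit bound $t^{(S-n-d)/(t-1)}$. Your route is the classical one: first reduce $P$ modulo the grid polynomials $g_i(x_i)=\prod_{a\in A_i}(x_i-a)$ to force $\deg_{x_i}\widetilde P\leqslant |A_i|-1$ without increasing the total degree or changing values on $B$, then induct on $n$ by splitting off the leading coefficient $Q_e$ in $x_n$, using the standard fact that a polynomial with individual degrees below $|A_i|$ cannot vanish on the whole grid unless it is zero, and finally checking that the assembled tuple $(q_1,\dots,q_{n-1},|A_n|-e)$ is feasible for the minimization; the bookkeeping $\sum_{i<n}q_i+(|A_n|-e)\geqslant\sum_i|A_i|-d$ is exactly right, and applying the inductive hypothesis with the relaxed degree bound $d-e\geqslant\deg Q_e$ is harmless because enlarging the degree parameter only shrinks the minimum. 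The paper's argument is quite different in mechanism: following Micha\l ek's proof of the Combinatorial Nullstellensatz, it divides by a single linear factor $(x_1-b)$, does a case analysis on whether the remainder $R_b$ vanishes on the subgrid, and converts the resulting product lower bound into an exponential one via the convexity inequality of Lemma~\ref{Lemma Technical}. What each approach buys: yours yields the sharp min-product form of Alon--F\"uredi (from which the paper's $5^{n/4}$ count follows just as easily), while the paper's version trades sharpness for a short, self-contained induction that avoids both the grid-reduction step and the optimization formulation, which is all that is needed for Corollary~\ref{Corollary Number of Colorings}.
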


For a convenient use of this result, and for the sake of completeness, we will prove a slightly weaker statement by an argument resembling a beautiful proof of Combinatorial Nullstellensatz, due to Micha\l ek~\cite{Michalek}. A similar approach was taken by Bishnoi, Clark, Potukuchi, and Schmitt~\cite{BishnoiCPS} to get some generalization of the Alon-F\"{u}redi theorem.

We need a simple technical lemma.

\begin{lemma}\label{Lemma Technical}
    Let $a_1,a_2,\dots,a_n$ be positive integers, with $\max a_i=t\geqslant 2$ and $\sum_{i=1}^{n}a_i=S$. Then
    \begin{equation}\label{Inequality Lemma}
    A=\prod_{i=1}^{n}a_i \geqslant t^{\frac{S-n}{t-1}}.
    \end{equation}\end{lemma}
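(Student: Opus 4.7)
The plan is to reduce the product inequality to a pointwise one-variable inequality by taking logarithms. Setting $A=\prod_{i=1}^n a_i$ and passing to $\log A=\sum_{i=1}^n \log a_i$, the claimed bound $A\geqslant t^{(S-n)/(t-1)}$ is equivalent to
\[
\sum_{i=1}^n \log a_i \;\geqslant\; \frac{S-n}{t-1}\,\log t \;=\; \sum_{i=1}^n \frac{a_i-1}{t-1}\,\log t.
\]
So it suffices to establish the per-coordinate estimate $\log a_i \geqslant \tfrac{a_i-1}{t-1}\log t$ for every $i$, and then sum.

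To prove that pointwise inequality, I would invoke the concavity of $\log$ on $[1,t]$. Each $a_i$ is a positive integer with $1\leqslant a_i\leqslant t$, so $a_i$ lies in the interval $[1,t]$. The chord of the graph of $\log$ joining $(1,0)$ to $(t,\log t)$ is the affine function $a\mapsto \tfrac{a-1}{t-1}\log t$, and by concavity the graph of $\log$ lies weakly above this chord on the whole interval. Evaluating at $a_i$ yields exactly $\log a_i\geqslant \tfrac{a_i-1}{t-1}\log t$, with equality at the endpoints $a_i=1$ and $a_i=t$. Note that the hypothesis $t\geqslant 2$ is used precisely to ensure that $t-1\geqslant 1$, so the chord is well defined and the interval $[1,t]$ is non-degenerate.

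Summing the pointwise inequality over $i=1,\dots,n$, using $\sum a_i=S$, and exponentiating gives $A\geqslant t^{(S-n)/(t-1)}$, as required. The proof is essentially a one-line application of concavity, so the only potential obstacle is making sure the chord inequality is applied on the correct interval; since the hypothesis $\max a_i=t$ forces $a_i\leqslant t$ for every $i$, this is automatic. For completeness, one could alternatively argue by induction, showing that decreasing any $a_i\in(1,t)$ by $1$ and increasing some other $a_j<t$ by $1$ cannot increase the product (since $(a_i-1)(a_j+1)\leqslant a_ia_j$ when $a_i\leqslant a_j+1$), and thereby reducing to the extremal case where every $a_i\in\{1,t\}$; but the concavity proof is cleaner.
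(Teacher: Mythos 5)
Your proof is correct, and it is actually more direct than the paper's. The key one-variable fact you use, $\log a \geqslant \tfrac{a-1}{t-1}\log t$ for $a\in[1,t]$, is precisely the same inequality the paper records as its display~(\ref{Lemma Convexity}), namely $x \geqslant t^{(x-1)/(t-1)}$; the paper justifies it by observing that $f(x)=t^{(x-1)/(t-1)}$ is convex with $f(1)=1$, $f(t)=t$, while you observe that $\log$ is concave and lies above the chord from $(1,0)$ to $(t,\log t)$ — these are literally the same statement dressed differently. The genuine difference is in the global structure: the paper runs an induction on $n$, peeling off the minimum entry $m=\min a_i$ at each step and applying the one-variable inequality only to that single term, whereas you apply the pointwise inequality to every $a_i$ simultaneously and then sum (equivalently, multiply $a_i \geqslant t^{(a_i-1)/(t-1)}$ over all $i$). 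Your route eliminates the induction entirely and avoids the (small) bookkeeping the paper needs to verify that the maximum remains $t$ after deleting the minimum; what the paper's formulation buys is stylistic consistency with the subsequent proof of Theorem~\ref{Theorem Alon-Furedi Weak}, which really does require induction. Your closing remark about a smoothing argument is a third valid route but, as you say, less clean than the concavity proof.
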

\begin{proof}The proof is by induction on $n$. For $n=1$ we have $A=a_1=t$ and $S=a_1=t$, hence we get equality in (\ref{Inequality Lemma}). For $n\geqslant 2$, let $a_{i_0}=\min a_i=m$. Then, by the inductive assumption, we have
    $$\frac{A}{a_{i_0}}=\frac{A}{m}\geqslant t^{\frac{(S-m)-(n-1)}{t-1}}\textrm{.}$$
    Observe that for every $x\in [1,t]$ we have
    \begin{equation}\label{Lemma Convexity}
    x\geqslant t^{\frac{x-1}{t-1}}.
    \end{equation}
    Indeed, it is not hard to check that the function $f(x)=t^{(x-1)/(t-1)}$ is convex, with $f(1)=1$ and $f(t)=t$. Hence, taking $x=m$, we may write
    $$A\geqslant m\cdot t^{\frac{(S-m)-(n-1)}{t-1}}\geqslant t^{\frac{m-1}{t-1}} \cdot t^{\frac{(S-m)-(n-1)}{t-1}}=t^{\frac{S-n}{t-1}}\textrm{,}$$
    as asserted.
\end{proof}

\begin{theorem}\label{Theorem Alon-Furedi Weak}
Let $\mathbb{F}$ be an arbitrary field, and let $A_1,A_2,\dots,A_n$ be any non-empty subsets of $\mathbb{F}$, with $S=\sum_{i=1}^{n}|A_i|$ and $t=\max |A_i|$. Let $B=A_1\times A_2\times\cdots \times A_n$ and suppose that $P(x_1,x_2,\dots,x_n)$ is a polynomial over $\mathbb{F}$ of degree $\deg P=d$, that does not vanish on all of $B$. Then, the number of points in $B$ for which $P$ has a non-zero value is at least $$t^{\frac{S-n-d}{t-1}}\textrm{,}$$ provided that $S\geqslant n+d$ and $t\geqslant 2$.
\end{theorem}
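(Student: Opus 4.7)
My plan is to follow the elegant argument of Micha\l ek for the Combinatorial Nullstellensatz, adapted to counting the non-zero values of $P$ on $B$. The first step is to reduce $P$ modulo the ideal generated by the polynomials $g_i(x_i) := \prod_{a \in A_i}(x_i - a)$, each of which vanishes on $A_i$. This produces a polynomial $\bar{P}$ with $\bar{P}(b) = P(b)$ for every $b \in B$ and $\deg_{x_i}(\bar{P}) \leq |A_i| - 1$ for all $i$; since every replacement of $x_i^{|A_i|}$ by $x_i^{|A_i|} - g_i(x_i)$ turns a monomial of total degree $\delta$ into a polynomial of total degree at most $\delta - 1$, we retain $\deg \bar{P} \leq d$. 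The hypothesis on $P$ then ensures that $\bar{P}$ is not identically zero either as a function on $B$ or as a polynomial.

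The heart of the argument is an induction on $n$ establishing the stronger Alon--F\"uredi bound: the number of points in $B$ where $\bar{P}$ is non-zero is at least $\prod_{i=1}^n q_i$ for some integers $q_i$ with $1 \leq q_i \leq |A_i|$ and $\sum_{i=1}^{n} q_i \geq S - d$. The base case $n=1$ is the familiar fact that a univariate polynomial of degree $\leq d$ has at most $d$ roots, giving at least $|A_1| - d$ non-zeros. For the inductive step, write
\[
\bar{P}(x_1,\dots,x_n) = \sum_{j=0}^{|A_n|-1} P_j(x_1,\dots,x_{n-1})\, x_n^j,
\]
and let $\ell$ be the largest $j$ with $P_j \not\equiv 0$. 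Then $\deg P_\ell \leq d - \ell$, and because the individual degrees of $P_\ell$ remain bounded by $|A_i| - 1$, the polynomial $P_\ell$ does not vanish identically on $A_1 \times \cdots \times A_{n-1}$, so the induction hypothesis applies to it. This yields $q_1', \dots, q_{n-1}'$ and at least $\prod q_i'$ points $(a_1,\dots,a_{n-1})$ where $P_\ell \neq 0$. For each such point, the univariate polynomial $\bar{P}(a_1,\dots,a_{n-1},x_n)$ has degree exactly $\ell$ in $x_n$, hence is non-zero on at least $|A_n| - \ell$ elements of $A_n$. Setting $q_n := |A_n| - \ell$ (and $q_i := q_i'$ for $i<n$) preserves all the constraints and finishes the induction.

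Finally, Lemma~\ref{Lemma Technical} converts this product bound into the claimed exponential form. Since $1 \leq q_i \leq t$ and $\sum q_i \geq S - d \geq n$, the lemma yields $\prod q_i \geq t^{(\sum q_i - n)/(t-1)} \geq t^{(S-n-d)/(t-1)}$; the case $\max q_i < t$ is absorbed by the monotonicity of $s \mapsto s^{1/(s-1)}$ for $s \geq 2$, which only improves the bound when the effective maximum is smaller. I expect the most delicate step to be the inductive bookkeeping: verifying that $P_\ell$ is non-vanishing on the $(n-1)$-dimensional grid (which relies on the fact that a polynomial with individual degrees bounded by $|A_i|-1$ vanishing on a full grid is identically zero), and checking that the auxiliary hypothesis $S \geq n+d$ is inherited by the smaller problem $(P_\ell, A_1 \times \cdots \times A_{n-1})$ via the identity $(S - |A_n|) - (d - \ell) \geq (n-1)$.
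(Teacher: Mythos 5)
Your argument is essentially correct, but it takes a genuinely different route from the paper. You first reduce $P$ modulo the grid polynomials $g_i(x_i)=\prod_{a\in A_i}(x_i-a)$ and then prove, by induction on $n$ via the top-coefficient decomposition $\bar{P}=\sum_j P_j x_n^j$, the full Alon--F\"uredi product bound (there exist $q_i$ with $1\leqslant q_i\leqslant |A_i|$, $\sum_i q_i\geqslant S-d$, and at least $\prod_i q_i$ non-vanishing points), which you then convert to the exponential form via Lemma~\ref{Lemma Technical} and the monotonicity of $s\mapsto s^{1/(s-1)}$ for $s\geqslant 2$. In other words, you essentially reprove Theorem~\ref{Theorem Alon-Furedi} itself and then deduce the weak form --- exactly the detour the paper set out to avoid. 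The paper instead argues directly, in the style of Micha\l ek's proof of the Combinatorial Nullstellensatz: it inducts on $d$ and $n$, divides $P$ by $(x_1-b)$ for $b\in A_1$, and splits into cases according to whether the remainder $R_b$ vanishes on the subgrid $A_2\times\cdots\times A_n$, obtaining the bound $t^{(S-n-d)/(t-1)}$ at every step with no grid reduction, no bookkeeping of the $q_i$, and Lemma~\ref{Lemma Technical} used only in the base case $d=0$ (the convexity inequality~(\ref{Lemma Convexity}) doing the rest). Your route buys the stronger product-form conclusion along the way; the paper's buys a shorter, self-contained induction tailored to the exponential bound actually needed.

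Two small repairs to your write-up. First, the worry you flag at the end is misplaced: the inequality $(S-|A_n|)-(d-\ell)\geqslant n-1$ you propose to verify is simply false in general (take $\ell$ much smaller than $|A_n|-1$ when $S=n+d$), but it is also not needed, because the product-form induction statement carries no hypothesis of the form $S\geqslant n+d$; when $(S-|A_n|)-\deg P_\ell< n-1$ the constraint $\sum_{i<n} q_i'\geqslant S'-d'$ is met trivially by $q_i'=1$, using only that $P_\ell$, being nonzero with $\deg_{x_i}P_\ell\leqslant |A_i|-1$, is non-vanishing at some point of the subgrid. Second, in the base case $n=1$ the correct choice is $q_1=\max(|A_1|-d,1)$ rather than $|A_1|-d$, to keep $q_1\geqslant 1$ when $d\geqslant |A_1|$. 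With these adjustments (and noting the trivial case in which all $q_i=1$, where the claimed bound is at most $1$), your proof is sound.
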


\begin{proof}The proof is by induction on $d$ and $n$. If $d=0$, then $P$ equals some non-zero constant $c\in \mathbb{F}$, and therefore all points of $B$ are non-vanishing for $P$. There are exactly $\prod_{i=1}^{n}|A_i|$ of them, so, the assertion follows from Lemma~\ref{Lemma Technical} by putting $a_i=|A_i|$.
    
    For $n=1$ and arbitrary $d\geqslant 1$, we know that the number of roots of a polynomial $P$ is at most $d$. Hence, the number of elements of $A_1$ for which $P$ is non-zero is at least $t-d$. So, by the assumption that $t\geqslant d+1$ and the inequality (\ref{Lemma Convexity}), we have $$t-d\geqslant t^{\frac{t-1-d}{t-1}}=t^{\frac{S-1-d}{t-1}}\textrm{,}$$as $S=t$ in this case.
    
    Assume now that $d\geqslant 1$ and $n\geqslant 2$. Let $|A_1|=j$, and assume, without loss of generality, that there is another set $A_i$, with $|A_i|=t$. Let $b\in A_1$ be any element, and let us divide the polynomial $P$ by $(x_1-b)$:
    $$P=(x_1-b)Q_b+R_b\textrm{.}$$
    Observe that $\deg Q_b=\deg P-1=d-1$, $\deg R_b\leqslant d$, and that the polynomial $R_b$ does not contain the variable $x_1$.
    
    Suppose first that the polynomial $R_b$ vanishes at all points of the grid $A_2\times \cdots \times A_n$. This implies that $j\geqslant 2$, since otherwise, the polynomial $P$ would vanish over the whole grid $B$, contrary to the assumption. Furthermore, each non-vanishing point of $P$ in $B$ is at the same time a non-vanishing point of $Q_b$ in the grid $(A_1-b)\times A_2\times \cdots \times A_n$, and vice versa. Thus, by the inductive assumption we get that $P$ has at least $$t^{\frac{(S-1)-n-(d-1)}{t-1}}=t^{\frac{S-n-d}{t-1}}$$ non-vanishing points in $B$.
    
    Finally, suppose that for every $b\in A_1$, the polynomial $R_b$ has some non-vanishing points in the grid $A_2\times \cdots \times A_n$. Then each such point can be extended to a non-vanishing point of $P$ by setting $x_1=b$. By the inductive assumption on $n$, the number of such points for each $R_b$ is at least $$t^{\frac{(S-j)-(n-1)-d}{t-1}}\textrm{.}$$ Hence, the total number of non-vanishing points of $P$ in the grid $B$ is at least $$j\cdot t^{\frac{(S-j)-(n-1)-d}{t-1}}\geqslant t^{\frac{j-1}{t-1}} \cdot t^{\frac{(S-j)-(n-1)-d}{t-1}}=t^{\frac{S-n-d}{t-1}}\textrm{,}$$by the inequality (\ref{Lemma Convexity}). The proof is complete.
    \end{proof}

The above result and Theorem~\ref{Theorem K_5 Polynomial} easily imply the following corollary.

\begin{corollary}\label{Corollary Number of Colorings} Let $G$ be a graph on $n$ vertices with no $K_5$-minor. Let $\mathbb{F}$ be an arbitrary field with at least $5$ elements. Then for every assignment of lists of size $5$ from $\mathbb{F}$ to the vertices of $G$ and every edge labeling $\ell$, there are at least $5^{n/4}$ different $\mathbb{F}$-colorings of $G$ from these lists.
\end{corollary}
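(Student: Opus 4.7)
The strategy is to apply Theorem~\ref{Theorem Alon-Furedi Weak} to the labeled decorated graph polynomial $D_{G,\ell}$, whose non-vanishing points in the grid $B = A_1 \times \cdots \times A_n$ are exactly the valid $\mathbb{F}$-colorings of $G$ for the fixed labeling and list assignment.

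First I would combine Theorem~\ref{Theorem K_5 Polynomial} with Theorem~\ref{Theorem General Field} to obtain $\A_{\mathbb{F}}(D_{G,\ell}) \leq 4$, so that $D_{G,\ell}$ possesses a non-vanishing monomial $M$ of total degree $|E(G)|$ in which every variable appears to degree at most $4$. Since each $|A_i| = 5 \geq 4+1$, the Combinatorial Nullstellensatz yields a point of $B$ at which $D_{G,\ell}$ is non-zero, verifying the non-vanishing hypothesis of Theorem~\ref{Theorem Alon-Furedi Weak}.

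Next I would apply Theorem~\ref{Theorem Alon-Furedi Weak} with $t = 5$, $S = 5n$, and $d = \deg D_{G,\ell} = |E(G)|$. The precondition $S \geq n + d$ becomes $|E(G)| \leq 4n$, which is easy since every $K_5$-minor-free graph on $n$ vertices satisfies the standard sparsity bound $|E(G)| \leq 3n - 6$ (a direct consequence of Wagner's decomposition already used in the paper: planar triangulations have exactly $3n-6$ edges, $V_8$ has even fewer, and $k$-clique-sums with $k \leq 3$ preserve this linear bound). The conclusion of Alon--F\"{u}redi then provides at least
\[
5^{(5n - n - |E(G)|)/(5-1)} \;=\; 5^{(4n - |E(G)|)/4} \;\geq\; 5^{n/4}
\]
non-vanishing points of $D_{G,\ell}$ in $B$, where the last inequality uses $|E(G)| \leq 3n$. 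Each such point corresponds to a distinct $\mathbb{F}$-coloring of $G$ compatible with the lists and the labeling, completing the proof.

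The argument is essentially an assembly of pieces already developed, so there is no genuine obstacle. The only item requiring a brief justification is the identity $\deg D_{G,\ell} = |E(G)|$, which is immediate since the top-degree part of $D_{G,\ell}$ coincides with $D_G$ (all label terms contribute strictly lower-degree monomials, as shown in the proof of Theorem~\ref{Theorem General Field}) and $D_G$ carries non-vanishing monomials of total degree $|E(G)|$ coming from acyclic orientations.
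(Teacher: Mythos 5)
Your proposal is correct and follows essentially the same route as the paper: apply the weak Alon--F\"{u}redi bound (Theorem~\ref{Theorem Alon-Furedi Weak}) with $t=5$, $S=5n$, $d\leqslant 3n-6$ to the polynomial whose non-vanishing points are the valid colorings, using Theorems~\ref{Theorem K_5 Polynomial} and~\ref{Theorem General Field} (plus the Nullstellensatz) to guarantee the polynomial does not vanish on the whole grid. You are in fact slightly more careful than the paper's terse write-up, which speaks of ``$P_G$'' where the labeled decorated polynomial $D_{G,\ell}$ is meant and leaves the non-vanishing hypothesis implicit.
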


\begin{proof} Let $G$ be a graph satisfying assumptions of the corollary and let $P_G$ be its polynomial. Let $A_i\subseteq \mathbb{F}$ be lists of size $5$ assigned to the vertices of $G$. So, keeping notation from Theorem~\ref{Theorem Alon-Furedi Weak}, we have $t=5$ and $S=5n$. Since the number of edges of $G$ is at most $3n-6$ (by Theorem~\ref{Theorem Wagner}), we have $d=\deg P_G\leqslant 3n-6$. Hence, the number of $\mathbb{F}$-colorings is at least
    $$t^{(S-n-d)/(t-1)}\geqslant 5^{(5n-n-(3n-6))/4}=5^{(n+6)/4}>5^{n/4}\textrm{,}$$
    as asserted.
    \end{proof}

\section{Discussion}
Let us conclude the paper with some further observations and possible extensions of our results.

First, let us point on other improvements of existing results one may easily get by using graph polynomials. For instance, in~\cite{Thomassen Exponentially 5} Thomassen proved that every planar graph on $n$ vertices has at least $2^{n/9}$ colorings from arbitrary lists of size $5$. By a direct application of the Alon-F\"{u}redi theorem one may improve this bound to $5^{n/4}$, arguing in the same way as in the proof of Corollary~\ref{Corollary Number of Colorings}. A similar improvement can be made for planar graphs of girth at least $5$. Thomassen proved that every such graph $G$ with $n$ vertices has at least $2^{n/10000}$ colorings from lists of size $3$, while the Alon-F\"{u}redi theorem gives $3^{n/6}$.

Another striking consequence can be formulated in connection to the famous theorem of Brooks~\cite{Brooks} which asserts that every connected graph $G$ of maximum degree $\Delta$ is $\Delta$-colorable, except for the two cases, when $G$ is a clique or an odd cycle (see~\cite{Zajac} for a short proof of some more general versions). By Theorem~\ref{Theorem Alon-Furedi Weak} we get that the number of colorings in this case is at least $$(\sqrt{\Delta})^{n(1-\frac{1}{\Delta-1})}\textrm{.}$$ By a different argument based on the probabilistic method, this can be improved to roughly $$(\Delta/e)^{n(1-\frac{\log \Delta}{\Delta})}\textrm{,}$$ as noticed by Alon (personal communication).

Another advantage of using graph polynomials is that any upper bound on $\A_{\mathbb{F}}(P_G)$ not only gives an upper bound for the choosability of $G$, but also for its \emph{game} variant, known as \emph{paintability} or \emph{online choosability}. Roughly speaking, a graph $G$ is \emph{$k$-paintable}, if Alice, a color-blind person, can color the vertices of $G$ from any lists of size $k$ in a sequence of rounds; in each round a new color is highlighted in the lists and Alice must decide which vertices will be painted by this color. This idea was introduced independently by Schauz~\cite{Schauz Paint Correct} and Zhu~\cite{Zhu Online}. It is clear that every $k$-paintable graph is $k$-choosable, but not the other way around. In~\cite{Schauz} Schauz proved that every graph $G$ satisfying $\A_{\mathbb{F}}(G)\leqslant k-1$ is $k$-paintable (see~\cite{GrytczukJZ} for a different, purely algebraic proof). Hence, all our results for field choosability transfer to field paintability in a direct way.

Finally, let us mention about the intriguing conjecture relating group coloring and list coloring of graphs, stated by Kr\'{a}l', Pangr\'{a}c, and Voss in~\cite{KralPV}. They suspect that if a graph $G$ is $\Gamma$-colorable for every Abelian group of order at least $k$, then $G$ is $k$-choosable. We offer the following, somewhat provocative, strengthening of this statement.

\begin{conjecture}
If a graph $G$ is $\Gamma$-colorable for every Abelian group $\Gamma$ of order at least $k$, then $\A_{\mathbb{Q}}(P_G)\leqslant k-1$. This would imply that $G$ is $k$-paintable and $k$-choosable.
\end{conjecture}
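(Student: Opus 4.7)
My plan is to argue by contrapositive: assume $\A_{\mathbb{Q}}(P_G) \geqslant k$ and exhibit an Abelian group $\Gamma$ with $|\Gamma| \geqslant k$ together with a labeling $\ell: E(G) \to \Gamma$ so that, for every orientation of $G$, no coloring $c: V(G) \to \Gamma$ satisfies $c(y)-c(x)\neq\ell(xy)$ on every edge. Via the Alon--Tarsi interpretation, the hypothesis $\A_{\mathbb{Q}}(P_G) \geqslant k$ says that every acyclic orientation of $G$ with all in-degrees at most $k-1$ has a balanced count of even and odd Eulerian subdigraphs; the task is to convert this algebraic vanishing into a concrete group-theoretic obstruction.

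The steps I would carry out are as follows. First, fix a candidate group $\Gamma$, most naturally $\mathbb{Z}_k$, or, for flexibility, a product of cyclic prime-power groups of total order close to $k$. Second, use the field-embedding trick from the proof of Theorem~\ref{Theorem Cyclic Groups} to recast $\Gamma$-colorability for labeling $\ell$ as non-vanishing of the associated polynomial $P_{G,\ell}$ over an extension field $\mathbb{F}$ containing $\Gamma$ additively, or multiplicatively after substitution. Third, use that vanishing of a top-degree coefficient over $\mathbb{Q}$, after clearing denominators, persists over $\mathbb{F}$. Fourth, and this is the heart of the argument, run an averaging or interpolation argument over labelings $\ell \in \Gamma^{E(G)}$: if every labeling admitted a coloring, then a suitable combination of the polynomials $P_{G,\ell}$ over all $\ell$ should either isolate or force a non-vanishing monomial of $P_G$ of full degree with every variable degree at most $k-1$, contradicting the hypothesis. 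The desired bad labeling would then be extracted from this contradiction.

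The main obstacle is precisely this last step: there is no standard mechanism that, starting from the vanishing of all low-degree coefficients of $P_G$ over $\mathbb{Q}$, produces a concrete labeling $\ell$ for which $P_{G,\ell}$ has no non-vanishing point on the grid $\Gamma^n$. The invariant $\A_{\mathbb{Q}}(P_G)$ depends only on $P_G$, whereas non-$\Gamma$-colorability demands a witness $\ell$, and no general bridge between these two sides of Combinatorial Nullstellensatz is known. This is also why even the weaker Kr\'al--Pangr\'ac--Voss conjecture, which replaces $\A_{\mathbb{Q}}(P_G)\leqslant k-1$ by $k$-choosability, remains open. A sensible entry point is therefore to verify the statement for small $k$, and to run a computer search over small graphs with $\A_{\mathbb{Q}}(P_G)$ equal to $k$ on the nose, checking $\Gamma$-colorability for every small Abelian $\Gamma$ of order at least $k$; the structural pattern that emerges should either suggest the correct algebraic construction or, conversely, surface a counterexample.
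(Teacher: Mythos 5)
The statement you are trying to prove is not a theorem of the paper but an open conjecture: the authors explicitly offer it as a ``somewhat provocative'' strengthening of the Kr\'al'--Pangr\'ac--Voss conjecture and give no proof, so there is no argument in the paper to compare yours against. Judged on its own terms, your proposal is a research plan rather than a proof, and you say so yourself: the entire content of the conjecture sits in your fourth step, and that step is missing. Knowing that $\A_{\mathbb{Q}}(P_G)\geqslant k$, i.e.\ that every top-degree monomial of $P_G$ with all exponents at most $k-1$ has vanishing coefficient over $\mathbb{Q}$, gives you an algebraic fact about one polynomial; producing a specific Abelian group $\Gamma$ of order at least $k$ and a specific labeling $\ell$ for which $D_{G,\ell}$ (equivalently $P_{G,\ell}$) vanishes on all of $\Gamma^n$ is a much stronger, existential statement, and Combinatorial Nullstellensatz only runs in the opposite direction: a non-vanishing coefficient forces a non-vanishing point, but vanishing coefficients force nothing. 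Your proposed averaging over all labelings $\ell\in\Gamma^{E(G)}$ does not close this gap, because the constant terms $\ell(e)$ only contribute to monomials of total degree strictly below $|E(G)|$ (this is exactly the observation in the proof of Theorem~\ref{Theorem General Field}), so no linear combination of the polynomials $P_{G,\ell}$ can recover or constrain the top-degree coefficients of $P_G$ that the hypothesis is about.

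Two smaller points. First, your statement of the contrapositive should be phrased per orientation: non-$\Gamma$-colorability means that for the chosen orientation some labeling defeats every coloring, and the witnessing labeling may be adjusted (by sign changes on reversed edges) when the orientation changes; this is harmless but worth saying correctly. Second, the only part of the conjecture that is actually provable with known tools is the final sentence: $\A_{\mathbb{Q}}(P_G)\leqslant k-1$ implies $k$-choosability by Combinatorial Nullstellensatz and $k$-paintability by Schauz's theorem, as the paper notes. Your suggestion to test small cases computationally is reasonable as exploration, but it should be presented as such, not as a step toward a proof of the general statement.
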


\end{document}